\documentclass{amsart}
\usepackage{amsmath}
\usepackage{amsfonts}
\usepackage{graphics}
\usepackage{epsfig}
\usepackage{amssymb}
\usepackage{amscd}
\usepackage[all]{xy}
\usepackage{latexsym}
\usepackage{graphicx}
\usepackage{multirow}
\usepackage{geometry}

\theoremstyle{plain}
\newtheorem{thm}{Theorem}[section]
\newtheorem{lem}[thm]{Lemma}

\theoremstyle{definition}
\newtheorem{defn}[thm]{Definition}

\theoremstyle{remark}

\numberwithin{equation}{section} \numberwithin{figure}{section}

\renewcommand*{\to}{\rightarrow}
\renewcommand*{\bar}[1]{\overline{#1}}

\newcommand{\mb}[1]{\mathbb{#1}} 

\newcommand{\mc}[1]{\mathcal{#1}}
\newcommand{\mk}[1]{\mathfrak{#1}}

\newcommand{\mf}[1]{\mathbf{#1}}

\usepackage{url}
\usepackage{pgfplots}
\pgfplotsset{compat=1.18}
\usetikzlibrary{calc} 
\usepackage{mathtools}
\usepackage{tikz-cd}

\title{An Explicit Model for Conic Laplacians on \(\mb P^1\)}

\begin{document}\fontsize{12}{20pt}\selectfont
\author{Jia-Ming (Frank) Liou}
\address{Department of Mathematics\\
National Cheng Kung University\\
No.1, University Road, Tainan City 701, Taiwan\\ fjmliou@mail.ncku.edu.tw}

\begin{abstract}
We study an explicit model of a conic Laplacian on \(\mb P^1\) with cone angle \(2\pi n\) at \(0\) and \(\infty\). Using Fourier decomposition, we reduce the eigenvalue equation to a family of associated Legendre equations 
and describe the corresponding asymptotic boundary data. The classical Legendre connection formulas induce an explicit gluing map between the boundary data at the two conic points. We compute the Friedrichs spectrum 
and eigenfunctions and show that the Weyl function, and hence the associated \(S\)-matrix, can be recovered explicitly from this boundary connection map.

\end{abstract}
\maketitle

\noindent\textbf{Keywords.}
Conic Laplacian, complex projective line, self-adjoint extension, boundary symplectic space, Weyl function, \(S\)-matrix, Legendre functions, Friedrichs extension

\section{Introduction}

The purpose of this paper is to develop in detail an explicit model of
a conic Laplacian on the complex projective line. Let \(n\geq2\), and
equip \(\mb P^1\) with the metric
\[
    ds_n^2
    =
    \frac{4n^2|x|^{2n-2}}{(1+|x|^{2n})^2}|dx|^2.
\]
This metric has cone angle \(2\pi n\) at both \(0\) and \(\infty\) and
is the pullback of the standard round metric under the branched covering
\(x\mapsto x^n\). It therefore provides a natural model in which the
boundary and spectral structures of a conic Laplacian can be described
completely and explicitly. For general background on metrics and
Laplacians with conic singularities, see, for example,
\cite{Troyanov,Cheeger}.

We begin by decomposing the eigenvalue equation into Fourier modes.
After a suitable change of variables, each radial equation becomes an
associated Legendre equation. This allows us to describe explicitly the
local solutions, their asymptotic behavior, and their
square-integrability properties. We use the standard asymptotic formulas
for Legendre and associated Legendre functions collected in
\cite[Chapter~14]{DLMF}. Among the square-integrable local modes, only
finitely many contribute nontrivial boundary data to the quotient of
the maximal domain by the minimal domain. Their leading asymptotic
coefficients determine the boundary data spaces at the two conic points.
The general construction of these boundary spaces for conic Laplacians
was established in \cite[Section~2]{LiouKrein}.

The coordinate transformation \(y=x^{-1}\) reverses the angular modes
and relates the local solutions near \(0\) and \(\infty\). Together with
the classical connection formulas for associated Legendre functions
\cite[\S14.9]{DLMF}, it induces an explicit boundary connection map
relating the asymptotic data of the same formal solution at the two
conic points. In this model, the Legendre connection formulas thus
acquire a concrete geometric meaning as gluing laws for conic boundary
data.

The Green form induces a nondegenerate skew-Hermitian form on the
boundary data space, and the self-adjoint extensions of the minimal
Laplacian are parametrized by its Lagrangian subspaces. This
boundary-symplectic description was developed for conic Laplacians in
\cite{LiouKrein}; see also \cite{GilMendoza,GilKrainerMendoza} for the
general theory of closed extensions of elliptic cone operators. The
Lagrangian subspace determined by the regular asymptotic terms defines
the Friedrichs extension. We compute its spectrum and eigenfunctions
explicitly, prove the completeness of the resulting eigenfunction
system, and determine the corresponding \(L^2\)-norms.

For a spectral parameter in the resolvent set of a self-adjoint
extension, we use the resolvent to construct the unique formal solution
of \((\Delta-\lambda)u=0\) with prescribed boundary data in a chosen
complementary Lagrangian subspace. This construction leads naturally to
the Weyl function and, after choosing symplectically dual bases, to the
associated \(S\)-matrix. For the Friedrichs extension, both are computed
explicitly. The \(S\)-matrix decomposes into \(2\times2\) blocks
corresponding to the pairs of Fourier modes coupled by the coordinate
transformation \(y=x^{-1}\). For a related use of Krein's formula and
the \(S\)-matrix on Euclidean surfaces with conical singularities, see
\cite{HillairetKokotov}.

A central result of the paper is that the Weyl function can be recovered
directly from the boundary connection map. Consequently, the same
Legendre connection data that relate local solutions near \(0\) and
\(\infty\) also determine the global relation between their singular and
regular boundary data, as encoded by the Weyl function and the
\(S\)-matrix.

This paper may be viewed as a companion to our previous work on Krein's
formula for conic Laplacians \cite{LiouKrein}. In that work, the
boundary symplectic space, self-adjoint extensions, Weyl functions, and
related spectral constructions were developed in a general setting.
Here we realize these structures concretely for the projective model on
\(\mb P^1\). The present approach provides a complementary viewpoint:
rather than beginning with the abstract formalism, we derive the
boundary and spectral objects directly from the Fourier decomposition,
the associated Legendre equations, their local asymptotics, and their
connection formulas.

The paper has therefore been written to be as self-contained and
concrete as possible. We introduce only the general notions needed for
this model and carry out the principal constructions explicitly. Our
aim is not merely to record the resulting formulas, but to explain how
the boundary symplectic space, Lagrangian boundary conditions, the Weyl
function, and the \(S\)-matrix emerge naturally from the underlying
differential equations and from the gluing of their asymptotic data at
the two conic points. In this way, the projective model provides an
accessible and concrete entry point to the boundary and spectral theory
of conic Laplacians and makes the corresponding abstract constructions
more transparent.

The paper is organized as follows. In Section~2, we solve the local mode
equations, identify the critical boundary data, and construct the
boundary connection map. In Section~3, we describe the self-adjoint
extensions and compute the spectrum and eigenfunctions of the
Friedrichs extension. In Section~4, we construct formal solutions for
spectral parameters in the resolvent set, compute the Weyl function and
the \(S\)-matrix, and express them in terms of the boundary connection
map.

\section{Local Solutions, Boundary Data, and Gluing}

Let \(\mb P^1=(\mb C^2\setminus\{0\})/\mb C^*\) be the complex projective
line, with homogeneous coordinates \((a_0:a_1)\). We denote the points
\((0:1)\) and \((1:0)\) by \(0\) and \(\infty\), respectively. Let
\(U_0=\{(a_0:a_1)\in\mb P^1:a_1\neq 0\}\) and
\(U_\infty=\{(a_0:a_1)\in\mb P^1:a_0\neq 0\}\).

On \(U_0\), we use the affine coordinate \(x:U_0\to\mb C\) defined by
\(x(a_0:a_1)=a_0/a_1\), and on \(U_\infty\), we use the affine coordinate
\(y:U_\infty\to\mb C\) defined by \(y(a_0:a_1)=a_1/a_0\). On
\(U_0\cap U_\infty\), these coordinates satisfy \(xy=1\). Equivalently, the
transition function is \(y\circ x^{-1}(z)=z^{-1}\) for \(z\in\mb C^*\).
Thus \(\{(U_0,x),(U_\infty,y)\}\) is the standard complex atlas on
\(\mb P^1\).

Let \(n\geq 1\) be an integer. We define a Hermitian tensor \(ds_n^2\) on
\(\mb P^1\) by prescribing its coordinate expressions. In the affine
coordinate \(x\) on \(U_0\), set
\[
    ds_n^2
    =
    \frac{4n^2|x|^{2n-2}}{(1+|x|^{2n})^2}|dx|^2,
\]
and in the affine coordinate \(y\) on \(U_\infty\), set
\[
    ds_n^2
    =
    \frac{4n^2|y|^{2n-2}}{(1+|y|^{2n})^2}|dy|^2.
\]
Since \(y=x^{-1}\) and \(dy=-x^{-2}\,dx\) on \(U_0\cap U_\infty\), we have
\[
    \frac{4n^2|y|^{2n-2}}{(1+|y|^{2n})^2}|dy|^2
    =
    \frac{4n^2|x|^{2n-2}}{(1+|x|^{2n})^2}|dx|^2.
\]
Hence the two local expressions agree on \(U_0\cap U_\infty\) and determine
a global Hermitian tensor \(ds_n^2\) on \(\mb P^1\). For \(n=1\), this is the standard round metric, which we denote by
\(ds_{\mathrm{rd}}^2\). For \(n\geq 2\), the tensor is positive definite on
\(\mb P^1\setminus\{0,\infty\}\) and vanishes at \(0\) and \(\infty\). On
the punctured surface, it defines a spherical conic metric with cone angle
\(2\pi n\) at each of the two conic points. From now on, we assume \(n\geq 2\), set \(X=\mb P^1\) and
\(S=\{0,\infty\}\), and write \(X'=X\setminus S\).

We recall our conventions for derivatives in the affine coordinate \(x\).
Since \(U_0'=U_0\setminus\{0\}=X'\), the restriction
\(x|_{X'}:X'\to\mb C^*\) is a coordinate diffeomorphism. By a slight abuse
of notation, we continue to denote this restriction by \(x\), and its inverse
by \(x^{-1}\). For \(u\in C^\infty(X')\), we denote its representation in
the \(x\)-coordinate by
\[
    \widetilde u
    =
    u\circ x^{-1}
    \in C^\infty(\mb C^*).
\]
If \(z\) denotes the standard complex coordinate on \(\mb C\), then, for
\(p\in X'\), we define
\[
    \frac{\partial u}{\partial x}(p)
    =
    \frac{\partial\widetilde u}{\partial z}\bigl(x(p)\bigr),
    \qquad
    \frac{\partial u}{\partial\bar x}(p)
    =
    \frac{\partial\widetilde u}{\partial\bar z}\bigl(x(p)\bigr).
\]
We also use polar coordinates on \(\mb C^*\). Let
\[
    P:(0,\infty)\times S^1\longrightarrow\mb C^*,
    \qquad
    P(r,\theta)=re^{i\theta}.
\]
By a slight abuse of notation, we write
\[
    u(r,\theta)
    :=
    (\widetilde u\circ P)(r,\theta)
    =
    \widetilde u(re^{i\theta}).
\]
The corresponding radial and angular derivatives are defined by
\[
    \frac{\partial u}{\partial r}
    :=
    \frac{\partial}{\partial r}(\widetilde u\circ P),
    \qquad
    \frac{\partial u}{\partial\theta}
    :=
    \frac{\partial}{\partial\theta}(\widetilde u\circ P).
\]
Equivalently, in the coordinate \(x=re^{i\theta}\),
\[
    \frac{\partial}{\partial r}
    =
    e^{i\theta}\frac{\partial}{\partial x}
    +
    e^{-i\theta}\frac{\partial}{\partial\bar x},
    \qquad
    \frac{\partial}{\partial\theta}
    =
    ir e^{i\theta}\frac{\partial}{\partial x}
    -
    ir e^{-i\theta}\frac{\partial}{\partial\bar x}.
\]
Let \(dA\) be the area form associated with \(ds_n^2\). In the affine
coordinate \(x\), we have
\[
    (x^{-1})^*dA
    =
    \frac{4n^2|z|^{2n-2}}{(1+|z|^{2n})^2}
    \frac{i}{2}\,dz\wedge d\bar z.
\]
Identifying \(z\) with the affine coordinate \(x\), we write this more
simply as
\[
    dA
    =
    \frac{4n^2|x|^{2n-2}}{(1+|x|^{2n})^2}
    \frac{i}{2}\,dx\wedge d\bar x.
\]
Thus, in polar coordinates \(x=re^{i\theta}\),
\[
    P^*(x^{-1})^*dA
    =
    \frac{4n^2r^{2n-1}}{(1+r^{2n})^2}\,dr\wedge d\theta.
\]
We use the same conventions in the affine coordinate \(y\) near
\(\infty\). Since \(U_\infty\setminus\{\infty\}=X'\), the restriction
\(y|_{X'}:X'\to\mb C^*\) is also a coordinate diffeomorphism. By the same
abuse of notation, we continue to denote this restriction by \(y\), and its
inverse by \(y^{-1}\). For \(u\in C^\infty(X')\), we denote its
representation in the \(y\)-coordinate by
\[
    \widetilde v
    =
    u\circ y^{-1}
    \in C^\infty(\mb C^*).
\]
In polar coordinates \(y=\rho e^{i\eta}\), we write
\[
    v(\rho,\eta)
    :=
    (\widetilde v\circ P)(\rho,\eta)
    =
    \widetilde v(\rho e^{i\eta}).
\]
The corresponding derivative identities and area-form formulas are obtained
from the preceding ones by replacing
\((x,r,\theta,\widetilde u,u)\) with
\((y,\rho,\eta,\widetilde v,v)\). In particular,
\[
    dA
    =
    \frac{4n^2|y|^{2n-2}}{(1+|y|^{2n})^2}
    \frac{i}{2}\,dy\wedge d\bar y
\]
in the affine coordinate \(y\), and
\[
    P^*(y^{-1})^*dA
    =
    \frac{4n^2\rho^{2n-1}}{(1+\rho^{2n})^2}\,
    d\rho\wedge d\eta.
\]
Let \(L^2(X',dA)\) be the space of square-integrable complex-valued
measurable functions on \(X'\), where functions that agree almost
everywhere are identified. It is a separable complex Hilbert space with
Hermitian inner product
\[
    \langle u,v\rangle_{L^2(X',dA)}
    =
    \int_{X'}u\bar v\,dA.
\]
We use the convention that the inner product is linear in the first
variable. With the sign convention that the Laplacian is nonnegative, let
\[
    \Delta_c:C_c^\infty(X')
    \subset L^2(X',dA)
    \longrightarrow L^2(X',dA)
\]
denote the Laplace--Beltrami operator associated with \(ds_n^2\), initially
defined on compactly supported smooth functions on \(X'\). In the affine
coordinate \(x\), it is given by
\[
    \Delta_c
    =
    -
    \frac{(1+|x|^{2n})^2}{n^2|x|^{2n-2}}
    \frac{\partial^2}{\partial x\,\partial\bar x}.
\]
Let \(u\in L^2(X',dA)\). Suppose that there exists
\(F\in L^2(X',dA)\) such that
\[
    \langle u,\Delta_c\varphi\rangle_{L^2(X',dA)}
    =
    \langle F,\varphi\rangle_{L^2(X',dA)}
\]
for every \(\varphi\in C_c^\infty(X')\). Since
\(C_c^\infty(X')\) is dense in \(L^2(X',dA)\), such an \(F\), if it
exists, is unique. We denote it by \(\Delta u\) and say that
\(\Delta u=F\) in the distributional sense on \(X'\).

\begin{defn}
The maximal domain of the Laplacian is
\[
    \mc D_{\max}(\Delta)
    =
    \bigl\{
        u\in L^2(X',dA):
        \Delta u\in L^2(X',dA)
        \text{ in the distributional sense}
    \bigr\}.
\]
\end{defn}
We equip \(\mc D_{\max}(\Delta)\) with the graph norm
\[
    \|u\|_{\Delta}
    =
    \left(
        \|u\|_{L^2(X',dA)}^2
        +
        \|\Delta u\|_{L^2(X',dA)}^2
    \right)^{1/2}.
\]
The maximal Laplacian
\[
    \Delta:\mc D_{\max}(\Delta)\longrightarrow L^2(X',dA)
\]
is the adjoint of the densely defined operator \(\Delta_c\), and is therefore
closed. Consequently, \(\mc D_{\max}(\Delta)\) is complete with respect to
the graph norm.

Moreover, \(C_c^\infty(X')\subset\mc D_{\max}(\Delta)\), and for every
\(\varphi\in C_c^\infty(X')\), the distributional Laplacian agrees with the
classical Laplace--Beltrami operator:
\[
    \Delta\varphi=\Delta_c\varphi.
\]

\begin{defn}
The minimal domain of the Laplacian is
\[
    \mc D_{\min}(\Delta)
    =
    \overline{C_c^\infty(X')}^{\,\|\cdot\|_\Delta}
    \subset\mc D_{\max}(\Delta),
\]
where the closure is taken with respect to the graph norm. Equivalently,
the operator
\[
    \Delta:\mc D_{\min}(\Delta)\longrightarrow L^2(X',dA)
\]
is the closure of \(\Delta_c\).
\end{defn}

\begin{defn}\label{def:formal-solution}
Let \(\lambda\in\mb C\). An element
\(u_\lambda\in\mc D_{\max}(\Delta)\) is called a formal solution of
\begin{equation}\label{eq:model}
    (\Delta-\lambda)u_\lambda=0
\end{equation}
if the equation holds on \(X'\) in the sense of distributions.
\end{defn}

For \(\lambda\in\mb C\), we first solve the differential equation
\eqref{eq:model} in the affine coordinate \(x\) on \(X'\), without initially
imposing square-integrability or the full
\(\mc D_{\max}(\Delta)\)-condition. We then determine which of these
solutions belong to \(L^2(X',dA)\) and which Fourier modes contribute
nontrivial asymptotic boundary data.

For \(u\in L^2(X',dA)\), define its Fourier coefficients by
\[
    u_k(r)
    =
    \frac{1}{2\pi}
    \int_{0}^{2\pi}u(r,\theta)e^{-ik\theta}\,d\theta,
    \qquad k\in\mb Z,
\]
for almost every \(r>0\). Then
\[
    u(r,\theta)
    =
    \sum_{k\in\mb Z}u_k(r)e^{ik\theta},
\]
where the series converges in \(L^2(S^1)\) for almost every \(r\), and hence
also in the sense of distributions on \((0,\infty)\times S^1\). The
distributional Laplacian of \(u\) is given by
\[
    \Delta u
    =
    -\frac{(1+r^{2n})^2}{4n^2r^{2n}}
    \sum_{k\in\mb Z}(L_k u_k)(r)e^{ik\theta},
\]
where
\[
    L_k v=r^2v''+rv'-k^2v,
\]
with the derivatives interpreted in the distributional sense.

Let \(\lambda\in\mb C\). Since the operator \(\Delta-\lambda\) is elliptic
on \(X'\), every distributional solution of
\[
    (\Delta-\lambda)u_\lambda=0
\]
is smooth on \(X'\). Thus, at this stage, we consider
\(u_\lambda\in C^\infty(X')\) satisfying this equation, without imposing
square-integrability at \(0\) or \(\infty\). Write
\[
    u_\lambda(r,\theta)
    =
    \sum_{k\in\mb Z}u_k(r;\lambda)e^{ik\theta}.
\]
Then, for every \(k\in\mb Z\), the Fourier coefficient
\(u_k(r;\lambda)\) satisfies
\[
    (L_k u_k)(r;\lambda)
    +
    \frac{4n^2\lambda r^{2n}}{(1+r^{2n})^2}
    u_k(r;\lambda)
    =
    0
\]
on \((0,\infty)\).

Set \(R=r^n\) and
\(U_k(R;\lambda)=u_k(R^{1/n};\lambda)\). Since
\(r\partial_r=nR\partial_R\), the preceding equation becomes
\[
    R^2\frac{d^2U_k}{dR^2}
    +
    R\frac{dU_k}{dR}
    +
    \left(
        \frac{4\lambda R^2}{(1+R^2)^2}
        -
        \frac{k^2}{n^2}
    \right)U_k
    =
    0.
\]

Next, let \(\phi=2\arctan R\), so that
\(R=\tan(\phi/2)\) and \(0<\phi<\pi\), and set
\[
    v_k(\phi;\lambda)
    =
    U_k\bigl(\tan(\phi/2);\lambda\bigr).
\]
Since \(R\partial_R=\sin\phi\,\partial_\phi\), the equation takes the form
\[
    \frac{d^2v_k}{d\phi^2}
    +
    \cot\phi\,\frac{dv_k}{d\phi}
    +
    \left(
        \lambda
        -
        \frac{\alpha_k^2}{\sin^2\phi}
    \right)v_k
    =
    0,
\]
where \(\alpha_k=|k|/n\).

Let \(t=\cos\phi\), and set
\(V_k(t;\lambda)=v_k(\arccos t;\lambda)\) for \(-1<t<1\). Choose
\(\nu\in\mb C\) such that \(\lambda=\nu(\nu+1)\). Then the equation becomes
the associated Legendre equation
\begin{equation}\label{eq:associated-legendre-model}
    (1-t^2)\frac{d^2V_k}{dt^2}
    -
    2t\frac{dV_k}{dt}
    +
    \left(
        \nu(\nu+1)
        -
        \frac{\alpha_k^2}{1-t^2}
    \right)V_k
    =
    0.
\end{equation}
We now consider separately the zero mode \(k=0\) and the nonzero modes
\(k\neq 0\).

Suppose first that \(k=0\). Then the equation becomes
\[
    (1-t^2)\frac{d^2V_0}{dt^2}
    -
    2t\frac{dV_0}{dt}
    +
    \nu(\nu+1)V_0
    =
    0.
\]
Choose \(\nu=\nu(\lambda)\notin\{-1,-2,\ldots\}\) such that
\(\lambda=\nu(\nu+1)\). The general solution is
\[
    V_0(t;\lambda)
    =
    A_0(\lambda)P_\nu(t)
    +
    B_0(\lambda)Q_\nu(t),
\]
where \(P_\nu\) and \(Q_\nu\) denote the Ferrers functions of the first and
second kinds, respectively, on \((-1,1)\). Returning to the variable \(r\),
we obtain
\[
    u_0(r;\lambda)
    =
    A_0(\lambda)
    P_\nu\left(\frac{1-r^{2n}}{1+r^{2n}}\right)
    +
    B_0(\lambda)
    Q_\nu\left(\frac{1-r^{2n}}{1+r^{2n}}\right).
\]
As \(t\to1^{-}\), we have \(P_\nu(t)=1+O(1-t)\) and
\[
    Q_\nu(t)
    =
    \frac{1}{2}\log 2
    -
    \frac{1}{2}\log(1-t)
    -
    \gamma_E
    -
    \psi(\nu+1)
    +
    O\bigl((1-t)|\log(1-t)|\bigr),
\]
where \(\gamma_E\) is the Euler constant and
\(\psi=\Gamma'/\Gamma\) is the digamma function. Since
\[
    t=\frac{1-r^{2n}}{1+r^{2n}},
    \qquad
    1-t=\frac{2r^{2n}}{1+r^{2n}},
\]
it follows that, as \(r\to0^{+}\),
\[
    P_\nu\left(\frac{1-r^{2n}}{1+r^{2n}}\right)
    =
    1+O(r^{2n})
\]
and
\[
    Q_\nu\left(\frac{1-r^{2n}}{1+r^{2n}}\right)
    =
    -n\log r
    -
    \gamma_E
    -
    \psi(\nu+1)
    +
    O\bigl(r^{2n}|\log r|\bigr).
\]
Consequently,
\[
    u_0(r;\lambda)
    =
    A_0(\lambda)
    -
    B_0(\lambda)\bigl(\gamma_E+\psi(\nu+1)\bigr)
    -
    nB_0(\lambda)\log r
    +
    O\bigl(r^{2n}|\log r|\bigr).
\]
We next assume that \(k\neq0\) and \(\alpha_k\notin\mb Z\). Then
\(P_\nu^{-\alpha_k}\) and \(P_\nu^{\alpha_k}\) form a pair of linearly
independent solutions of the associated Legendre equation
\eqref{eq:associated-legendre-model}. Here and below, since
\(-1<t<1\), the notation \(P_\nu^\mu(t)\) refers to the Ferrers function
of the first kind. Returning to the variable \(r\), we obtain
\[
    u_k(r;\lambda)
    =
    A_k(\lambda)
    P_\nu^{-\alpha_k}
    \left(\frac{1-r^{2n}}{1+r^{2n}}\right)
    +
    B_k(\lambda)
    P_\nu^{\alpha_k}
    \left(\frac{1-r^{2n}}{1+r^{2n}}\right).
\]
Using the hypergeometric representation
\[
    P_\nu^\mu(t)
    =
    \frac{1}{\Gamma(1-\mu)}
    \left(\frac{1+t}{1-t}\right)^{\mu/2}
    {}_2F_1\left(
        -\nu,\nu+1;1-\mu;\frac{1-t}{2}
    \right),
\]
we obtain, as \(t\to1^{-}\),
\[
    P_\nu^\mu(t)
    =
    \frac{1}{\Gamma(1-\mu)}
    \left(\frac{1+t}{1-t}\right)^{\mu/2}
    \bigl(1+O(1-t)\bigr).
\]
For \(t=(1-r^{2n})/(1+r^{2n})\), we have
\[
    \frac{1+t}{1-t}=r^{-2n},
    \qquad
    \frac{1-t}{2}
    =
    \frac{r^{2n}}{1+r^{2n}}.
\]
It follows that, as \(r\to0^{+}\),
\[
    P_\nu^\mu
    \left(\frac{1-r^{2n}}{1+r^{2n}}\right)
    =
    \frac{1}{\Gamma(1-\mu)}
    r^{-n\mu}
    \bigl(1+O(r^{2n})\bigr).
\]
In particular,
\[
    P_\nu^{-\alpha_k}
    \left(\frac{1-r^{2n}}{1+r^{2n}}\right)
    =
    \frac{1}{\Gamma(1+\alpha_k)}
    r^{|k|}
    \bigl(1+O(r^{2n})\bigr),
\]
whereas
\[
    P_\nu^{\alpha_k}
    \left(\frac{1-r^{2n}}{1+r^{2n}}\right)
    =
    \frac{1}{\Gamma(1-\alpha_k)}
    r^{-|k|}
    \bigl(1+O(r^{2n})\bigr).
\]
Consequently,
\[
    u_k(r;\lambda)
    =
    \frac{A_k(\lambda)}{\Gamma(1+\alpha_k)}r^{|k|}
    +
    \frac{B_k(\lambda)}{\Gamma(1-\alpha_k)}r^{-|k|}
    +
    O(r^{2n-|k|})
\]
as \(r\to0^{+}\).

When \(\alpha_k\in\mb Z_{>0}\), the functions
\(P_\nu^{-\alpha_k}\) and \(P_\nu^{\alpha_k}\) are no longer linearly
independent. Set \(m=\alpha_k\). A convenient fundamental system, valid for
all \(\nu\), is given by \(P_\nu^{-m}\) and \(Q_\nu^m\). Thus the \(k\)-th
radial component has the form
\[
    u_k(r;\lambda)
    =
    A_k(\lambda)
    P_\nu^{-m}\left(\frac{1-r^{2n}}{1+r^{2n}}\right)
    +
    B_k(\lambda)
    Q_\nu^m\left(\frac{1-r^{2n}}{1+r^{2n}}\right).
\]
As \(t\to1^{-}\), the hypergeometric representation gives
\[
    P_\nu^{-m}(t)
    =
    \frac{1}{m!}
    \left(\frac{1-t}{1+t}\right)^{m/2}
    \bigl(1+O(1-t)\bigr),
\]
whereas
\[
    Q_\nu^m(t)
    =
    (-1)^m2^{m-1}(m-1)!
    (1-t^2)^{-m/2}
    \bigl(1+O((1-t)|\log(1-t)|)\bigr).
\]
Since
\[
    \frac{1-t}{1+t}=r^{2n},
    \qquad
    1-t^2=\frac{4r^{2n}}{(1+r^{2n})^2},
\]
we obtain, as \(r\to0^{+}\),
\[
    P_\nu^{-m}\left(\frac{1-r^{2n}}{1+r^{2n}}\right)
    =
    \frac{1}{m!}r^{nm}
    \bigl(1+O(r^{2n})\bigr)
\]
and
\[
    Q_\nu^m\left(\frac{1-r^{2n}}{1+r^{2n}}\right)
    =
    \frac{(-1)^m}{2}(m-1)!
    r^{-nm}
    \bigl(1+O(r^{2n}|\log r|)\bigr).
\]
Consequently, since \(nm=|k|\),
\[
    u_k(r;\lambda)
    =
    \frac{A_k(\lambda)}{m!}r^{|k|}
    +
    \frac{(-1)^m}{2}(m-1)!B_k(\lambda)r^{-|k|}
    +
    O\bigl(r^{2n-|k|}|\log r|\bigr)
\]
as \(r\to0^{+}\).

In both the non-integer-order and integer-order cases, the singular branch
has leading behavior \(r^{-|k|}\). Since
\(dA\sim C r^{2n-1}\,dr\,d\theta\) as \(r\to0^{+}\), this branch is
square-integrable near \(r=0\) if and only if
\[
    \int_0^\varepsilon
    r^{-2|k|}r^{2n-1}\,dr
    <
    \infty,
\]
or equivalently, if and only if \(|k|<n\). Hence the \(L^2\)-condition
forces
\[
    B_k(\lambda)=0
    \qquad\text{whenever } |k|\geq n.
\]
This is the first indication that the asymptotic boundary data are
finite-dimensional. We call the Fourier modes with \(|k|<n\) the critical
asymptotic modes. For \(0<|k|<n\), these are precisely the modes for which
both the regular branch \(r^{|k|}e^{ik\theta}\) and the singular branch
\(r^{-|k|}e^{ik\theta}\) are square-integrable near the conic point. We also
include the zero mode, since both \(1\) and \(\log r\) are square-integrable
with respect to \(dA\) near \(r=0\). Thus, in the critical modes, square-integrability alone does not eliminate
either of the two leading asymptotic coefficients. These coefficients will
subsequently be shown to determine the critical boundary data in
\(\mc D_{\max}(\Delta)/\mc D_{\min}(\Delta)\). By contrast, when
\(|k|\geq n\), the singular branch is not square-integrable and is therefore
excluded from the maximal domain.

We first finish the discussion of these complementary modes. For
\(|k|\geq n\), any square-integrable formal solution in the \(k\)-th mode
must be of the form
\[
    u_k(r;\lambda)
    =
    A_k(\lambda)
    P_\nu^{\mu_k}
    \left(\frac{1-r^{2n}}{1+r^{2n}}\right),
\]
where
\[
    \mu_k
    =
    \begin{cases}
        -\alpha_k, & \alpha_k\notin\mb Z,\\
        \alpha_k, & \alpha_k\in\mb Z_{>0}.
    \end{cases}
\]
After imposing the \(L^2\)-condition near \(r=0\), it remains to determine
when this branch is square-integrable at \(r=\infty\). Equivalently, we seek
the necessary and sufficient condition under which
\[
    P_\nu^{\mu_k}
    \left(\frac{1-r^{2n}}{1+r^{2n}}\right)e^{ik\theta}
    \in L^2(\mb C^*,dA).
\]

Let \(y=\rho e^{i\eta}=1/x\). Then \(\rho=r^{-1}\),
\(\eta=-\theta\), and
\[
    \frac{1-r^{2n}}{1+r^{2n}}
    =
    -
    \frac{1-\rho^{2n}}{1+\rho^{2n}}.
\]
We consider separately the cases in the definition of \(\mu_k\).

Suppose first that \(\alpha_k\notin\mb Z\), so that
\(\mu_k=-\alpha_k\). Using the hypergeometric representation of
\(P_\nu^{-\alpha_k}\) and the connection formula for the hypergeometric
function at \(1\), one finds that the coefficient of the singular branch
\(\rho^{-|k|}\) at \(\rho=0\) is
\[
    c_k(\nu)
    =
    \frac{\Gamma(\alpha_k)}
    {
        \Gamma(1+\alpha_k+\nu)
        \Gamma(\alpha_k-\nu)
    }.
\]
Since \(|k|\geq n\), the branch
\(\rho^{-|k|}e^{-ik\eta}\) is not square-integrable near \(\rho=0\).
Hence the solution is square-integrable at \(r=\infty\) if and only if
\(c_k(\nu)=0\). Since the gamma function has no zeros and has poles at the
nonpositive integers, this is equivalent to
\[
    \nu=\alpha_k+q
    \qquad\text{or}\qquad
    \nu=-\alpha_k-q-1
\]
for some \(q\in\mb Z_{\geq0}\).

Now suppose that \(\alpha_k=m\in\mb Z_{>0}\), so that \(\mu_k=m\). By the
connection formula,
\[
    P_\nu^m(-t)
    =
    \cos\pi(\nu+m)P_\nu^m(t)
    -
    \frac{2}{\pi}\sin\pi(\nu+m)Q_\nu^m(t).
\]
The \(Q_\nu^m\)-term has leading behavior of order
\((1-t)^{-m/2}\), which corresponds to the non-square-integrable branch
\(\rho^{-|k|}\) at \(\rho=0\). Thus, whenever
\(P_\nu^m\not\equiv0\), square-integrability at \(r=\infty\) requires
\(\sin\pi(\nu+m)=0\). Together with the nonvanishing condition for
\(P_\nu^m\), this gives
\[
    \nu=m+q
    \qquad\text{or}\qquad
    \nu=-m-q-1
\]
for some \(q\in\mb Z_{\geq0}\).

Consequently, in both cases,
\[
    \nu=\alpha_k+q
    \qquad\text{or}\qquad
    \nu=-\alpha_k-q-1,
    \qquad
    q\in\mb Z_{\geq0}.
\]
The two possibilities are related by the symmetry
\(\nu\mapsto-\nu-1\). Hence, up to this symmetry, we may write
\(\nu=\alpha_k+q\), and therefore
\[
    \lambda
    =
    (\alpha_k+q)(\alpha_k+q+1).
\]

We conclude the following.

\begin{lem}\label{lem:large-mode-regularity-condition}
Let \(k\in\mb Z\) satisfy \(|k|\geq n\), write
\(\lambda=\nu(\nu+1)\), and let
\[
    \mu_k
    =
    \begin{cases}
        -\alpha_k, & \alpha_k\notin\mb Z,\\
        \alpha_k, & \alpha_k\in\mb Z_{>0}.
    \end{cases}
\]
Define
\[
    \Psi_k(r,\theta;\lambda)
    =
    P_\nu^{\mu_k}
    \left(\frac{1-r^{2n}}{1+r^{2n}}\right)e^{ik\theta},
    \qquad
    x=re^{i\theta}.
\]
Then \(\Psi_k(\,\cdot\,;\lambda)\) is a nonzero square-integrable formal
solution on \(X'\) if and only if
\[
    \nu=\alpha_k+q
    \qquad\text{or}\qquad
    \nu=-\alpha_k-q-1
\]
for some \(q\in\mb Z_{\geq0}\). Equivalently, up to the symmetry
\(\nu\mapsto-\nu-1\), one has \(\nu=\alpha_k+q\). In this case,
\[
    \lambda
    =
    (\alpha_k+q)(\alpha_k+q+1).
\]
\end{lem}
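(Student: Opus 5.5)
Since \(\Psi_k\) satisfies \((\Delta-\lambda)\Psi_k=0\) classically on \(X'\) by construction, it remains to decide when it is nonzero and square-integrable. Near \(r=0\) square-integrability holds by the asymptotics above. For \(\mu_k=-\alpha_k\) we have \(\Psi_k\sim r^{|k|}/\Gamma(1+\alpha_k)\). For \(\mu_k=m\), the convention \(P_\nu^m=(-1)^m\frac{\Gamma(\nu+m+1)}{\Gamma(\nu-m+1)}P_\nu^{-m}\) shows that \(\Psi_k\) is a multiple of the regular branch \(r^{|k|}\), and this multiple vanishes exactly when \(\nu\in\{m-1,\dots,-m\}\) (mod the obvious gamma poles). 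Moreover \(dA\sim Cr^{2n-1}dr\,d\theta\), so \(r^{|k|}\) is integrable. Hence everything reduces to the behavior at \(\rho=1/r\to0\). There the argument of the Legendre function is \(-t'\) with \(t'=(1-\rho^{2n})/(1+\rho^{2n})\to1^-\).

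\emph{Non-integer case.} I will use the connection formula \cite[\S14.9]{DLMF} expressing \(P_\nu^{-\alpha}(-t')\) in terms of \(P_\nu^{-\alpha}(t')\) and \(P_\nu^{\alpha}(t')\). Both branches are independent since \(\alpha_k\notin\mb Z\). The coefficient of \(P_\nu^{\alpha}(t')\sim\rho^{-|k|}/\Gamma(1-\alpha)\) is, up to a nonzero factor, \(c_k(\nu)=\Gamma(\alpha_k)/(\Gamma(1+\alpha_k+\nu)\Gamma(\alpha_k-\nu))\). Equivalently, this follows from the \(z\mapsto1-z\) connection formula for \({}_2F_1(-\nu,\nu+1;1+\alpha;z)\), where \(c-a-b=\alpha\). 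Since \(|k|\ge n\), the term \(\rho^{-|k|}\) is not in \(L^2(\rho^{2n-1}d\rho)\), while the regular term \(\rho^{|k|}\) is. Hence \(\Psi_k\in L^2\) if and only if \(c_k(\nu)=0\), that is, \(1+\alpha_k+\nu\in\mb Z_{\le0}\) or \(\alpha_k-\nu\in\mb Z_{\le0}\). This gives exactly \(\nu=-\alpha_k-q-1\) or \(\nu=\alpha_k+q\). Nonvanishing is automatic here, because \(P_\nu^{-\alpha}\) has leading coefficient \(1/\Gamma(1+\alpha)\neq0\).

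\emph{Integer case.} I will use \(P_\nu^m(-t)=\cos\pi(\nu+m)P_\nu^m(t)-\frac{2}{\pi}\sin\pi(\nu+m)Q_\nu^m(t)\). The \(Q_\nu^m\) term behaves like \(\rho^{-|k|}\) with nonzero constant, and \(P_\nu^m(t')=O(\rho^{|k|})\). So when \(P_\nu^m\not\equiv0\), square-integrability at \(\infty\) holds iff \(\sin\pi(\nu+m)=0\), i.e. \(\nu\in\mb Z\). I then intersect \(\nu\in\mb Z\) with the nonvanishing condition \(\Gamma(\nu+m+1)/\Gamma(\nu-m+1)\neq0\), which excludes \(\nu\in\{-m,\dots,m-1\}\). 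What remains is \(\nu\ge m\) or \(\nu\le -m-1\), which is the stated set.

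The main obstacle is bookkeeping of the gamma factors: one must check that the prefactor multiplying \(c_k(\nu)\) (terms like \(\Gamma(-\alpha)\) and the \(\Gamma(1\pm\alpha)\) normalizations) never vanishes or blows up for non-integer \(\alpha\), so that the zeros of the singular coefficient are exactly those of \(c_k\). In the integer case one must also confirm that \(Q_\nu^m\) keeps its leading coefficient \((-1)^m2^{m-1}(m-1)!\neq0\) for integer \(\nu\), including the degenerate values. Finally, \(\nu\mapsto-\nu-1\) preserves \(\lambda=\nu(\nu+1)\) and maps \(-\alpha_k-q-1\) to \(\alpha_k+q\), which gives \(\lambda=(\alpha_k+q)(\alpha_k+q+1)\).
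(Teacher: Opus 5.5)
Your proof is correct and follows the paper's argument step for step. You reduce to the behaviour at $\rho=1/r\to0$. In the non-integer case you get the same obstruction coefficient $c_k(\nu)=\Gamma(\alpha_k)/(\Gamma(1+\alpha_k+\nu)\Gamma(\alpha_k-\nu))$ from the connection formula at $1$. In the integer case you use $P_\nu^m(-t)=\cos\pi(\nu+m)P_\nu^m(t)-\frac{2}{\pi}\sin\pi(\nu+m)Q_\nu^m(t)$ together with the nonvanishing of $P_\nu^m$. There you actually supply, via $P_\nu^m=(-1)^m\frac{\Gamma(\nu+m+1)}{\Gamma(\nu-m+1)}P_\nu^{-m}$, the nonvanishing analysis that the paper only asserts.

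One caveat, shared with the paper and relevant to the check you flag: $Q_\nu^m$ has poles at $\nu\in\{-m-1,-m-2,\ldots\}$. So at $\nu=-m-q-1$ that connection formula is a $0\cdot\infty$ expression, not a vanishing $Q$-term. The clean fix is to use $P_\nu^m=P_{-\nu-1}^m$ to reduce to $\operatorname{Re}\nu\geq-\tfrac{1}{2}$ before applying it. There $Q_\nu^m$ is defined and its leading coefficient $(-1)^m2^{m-1}(m-1)!$ is indeed $\nu$-independent.
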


Let \(\lambda\in\mb C\), and choose \(\nu\in\mb C\) such that
\(\lambda=\nu(\nu+1)\). Suppose that \(u_\lambda\) is a formal solution of
\eqref{eq:model}. In polar coordinates, write
\[
    u_\lambda(r,\theta)
    =
    \sum_{k\in\mb Z}u_k(r;\lambda)e^{ik\theta}.
\]
For the zero mode, write
\[
    u_0(r;\lambda)
    =
    A_0(\lambda)
    P_\nu\left(\frac{1-r^{2n}}{1+r^{2n}}\right)
    +
    B_0(\lambda)
    Q_\nu\left(\frac{1-r^{2n}}{1+r^{2n}}\right).
\]
For the critical nonzero modes \(0<|k|\leq n-1\), write
\[
    u_k(r;\lambda)
    =
    A_k(\lambda)
    P_\nu^{-\alpha_k}
    \left(\frac{1-r^{2n}}{1+r^{2n}}\right)
    +
    B_k(\lambda)
    P_\nu^{\alpha_k}
    \left(\frac{1-r^{2n}}{1+r^{2n}}\right),
\]
where \(\alpha_k=|k|/n\).

The asymptotic formulas obtained above show that the boundary data of
\(u_\lambda\) at \(0\) are encoded by the leading coefficients of its
critical Fourier modes as \(r\to0^+\). The zero mode contributes the model
terms \(1\) and \(\log|x|\). For each \(1\leq k\leq n-1\), the pair of
Fourier modes \(k\) and \(-k\) contributes the regular terms \(x^k\) and
\(\bar x^k\), together with the singular terms \(x^{-k}\) and
\(\bar x^{-k}\). More precisely,
\[
    r^ke^{ik\theta}=x^k,
    \qquad
    r^ke^{-ik\theta}=\bar x^k,
\]
whereas
\[
    r^{-k}e^{-ik\theta}=x^{-k},
    \qquad
    r^{-k}e^{ik\theta}=\bar x^{-k}.
\]
These model terms span a finite-dimensional space that records the possible
critical asymptotic behavior of \(u_\lambda\) near \(0\). The corresponding construction at \(\infty\) is obtained in the affine
coordinate \(y=\rho e^{i\eta}\), by examining the leading behavior as
\(\rho\to0^+\). These observations lead naturally to the definition of the
critical asymptotic spaces at the two conic points.

\begin{defn}
The critical asymptotic space at \(0\) is the finite-dimensional complex
vector space
\[
    V_0
    =
    \operatorname{span}_{\mb C}
    \bigl\{
        1,\log|x|,
        x^k,\bar x^k,x^{-k},\bar x^{-k}
        :
        1\leq k\leq n-1
    \bigr\},
\]
where the elements are regarded as model asymptotic terms on the punctured
affine chart \(U_0'=U_0\setminus\{0\}\). Similarly, the critical asymptotic
space at \(\infty\) is
\[
    V_\infty
    =
    \operatorname{span}_{\mb C}
    \bigl\{
        1,\log|y|,
        y^k,\bar y^k,y^{-k},\bar y^{-k}
        :
        1\leq k\leq n-1
    \bigr\},
\]
where the elements are regarded as model asymptotic terms on
\(U_\infty'=U_\infty\setminus\{\infty\}\). The critical asymptotic space
associated with the conic set \(S=\{0,\infty\}\) is
\[
    V_S=V_0\oplus V_\infty.
\]
\end{defn}

On \(U_0'\), write \(x=re^{i\theta}\), where \(r>0\) and
\(\theta\in S^1\). Define \(f_0=1/\sqrt{2\pi}\) and
\[
    f_0^\#
    =
    -
    \frac{1}{\sqrt{2\pi}}\log r.
\]
For \(j\in\mb Z\setminus\{0\}\) with \(1\leq |j|\leq n-1\), set
\[
    f_j
    =
    \frac{1}{\sqrt{4\pi|j|}}
    r^{|j|}e^{ij\theta},
    \qquad
    f_j^\#
    =
    \frac{1}{\sqrt{4\pi|j|}}
    r^{-|j|}e^{ij\theta}.
\]
Let
\[
    \mc F_0
    =
    \operatorname{span}_{\mb C}
    \{f_j:|j|\leq n-1\},
    \qquad
    \mc F_0^\#
    =
    \operatorname{span}_{\mb C}
    \{f_j^\#:|j|\leq n-1\}.
\]
Then
\[
    V_0=\mc F_0\oplus\mc F_0^\#.
\]

Similarly, on \(U_\infty'\), write \(y=\rho e^{i\eta}\), where
\(\rho>0\) and \(\eta\in S^1\). Define \(g_0=1/\sqrt{2\pi}\) and
\[
    g_0^\#
    =
    -
    \frac{1}{\sqrt{2\pi}}\log\rho.
\]
For \(j\in\mb Z\setminus\{0\}\) with \(1\leq |j|\leq n-1\), set
\[
    g_j
    =
    \frac{1}{\sqrt{4\pi|j|}}
    \rho^{|j|}e^{ij\eta},
    \qquad
    g_j^\#
    =
    \frac{1}{\sqrt{4\pi|j|}}
    \rho^{-|j|}e^{ij\eta}.
\]
Let
\[
    \mc F_\infty
    =
    \operatorname{span}_{\mb C}
    \{g_j:|j|\leq n-1\},
    \qquad
    \mc F_\infty^\#
    =
    \operatorname{span}_{\mb C}
    \{g_j^\#:|j|\leq n-1\}.
\]
Then
\[
    V_\infty=\mc F_\infty\oplus\mc F_\infty^\#.
\]

Let \(u_\lambda\) be a formal solution of \eqref{eq:model}. We define
\(\pi_0(u_\lambda)\in V_0\) by retaining the leading critical asymptotic
terms of \(u_\lambda\) as \(r\to0^+\). More precisely, write the radial
coefficients of the critical modes near \(0\) as
\[
    u_0(r;\lambda)
    =
    c_0(\lambda)
    +
    c_0^\#(\lambda)\log r
    +
    O(r^{2n}\log r)
\]
and, for \(0<|k|\leq n-1\),
\[
    u_k(r;\lambda)
    =
    c_k(\lambda)r^{|k|}
    +
    c_k^\#(\lambda)r^{-|k|}
    +
    O(r^{2n-|k|})
    \qquad
    \text{as } r\to0^+.
\]
We then set
\[
    \pi_0(u_\lambda)
    =
    c_0(\lambda)
    +
    c_0^\#(\lambda)\log|x|
    +
    \sum_{0<|k|\leq n-1}
    \left(
        c_k(\lambda)r^{|k|}e^{ik\theta}
        +
        c_k^\#(\lambda)r^{-|k|}e^{ik\theta}
    \right),
    \qquad
    x=re^{i\theta}.
\]
Equivalently,
\[
\begin{aligned}
    \pi_0(u_\lambda)
    &=
    c_0(\lambda)
    +
    c_0^\#(\lambda)\log|x|
    \\
    &\quad
    +
    \sum_{k=1}^{n-1}
    \left(
        c_k(\lambda)x^k
        +
        c_{-k}(\lambda)\bar x^k
        +
        c_{-k}^\#(\lambda)x^{-k}
        +
        c_k^\#(\lambda)\bar x^{-k}
    \right).
\end{aligned}
\]
By the asymptotic formulas obtained above,
\[
    c_0(\lambda)
    =
    A_0(\lambda)
    -
    B_0(\lambda)\bigl(\gamma_E+\psi(\nu+1)\bigr),
    \qquad
    c_0^\#(\lambda)
    =
    -nB_0(\lambda),
\]
and, for \(0<|k|\leq n-1\),
\[
    c_k(\lambda)
    =
    \frac{A_k(\lambda)}{\Gamma(1+\alpha_k)},
    \qquad
    c_k^\#(\lambda)
    =
    \frac{B_k(\lambda)}{\Gamma(1-\alpha_k)}.
\]
Since \(\alpha_{\pm k}=k/n\) for \(1\leq k\leq n-1\), it follows that
\[
\begin{aligned}
    \pi_0(u_\lambda)
    &=
    A_0(\lambda)
    -
    B_0(\lambda)\bigl(\gamma_E+\psi(\nu+1)\bigr)
    -
    nB_0(\lambda)\log|x|
    \\
    &\quad
    +
    \sum_{k=1}^{n-1}
    \left[
        \frac{A_k(\lambda)}{\Gamma(1+k/n)}x^k
        +
        \frac{A_{-k}(\lambda)}{\Gamma(1+k/n)}\bar x^k
    \right]
    \\
    &\quad
    +
    \sum_{k=1}^{n-1}
    \left[
        \frac{B_{-k}(\lambda)}{\Gamma(1-k/n)}x^{-k}
        +
        \frac{B_k(\lambda)}{\Gamma(1-k/n)}\bar x^{-k}
    \right].
\end{aligned}
\]

Equivalently, in terms of the bases \(f_j\) and \(f_j^\#\), we have
\begin{equation}\label{eq:pi0-critical-asymptotics}
\begin{aligned}
    \pi_0(u_\lambda)
    &=
    \sqrt{2\pi}
    \left[
        A_0(\lambda)
        -
        B_0(\lambda)\bigl(\gamma_E+\psi(\nu+1)\bigr)
    \right]f_0
    +
    n\sqrt{2\pi}\,B_0(\lambda)f_0^\#
    \\
    &\quad
    +
    \sum_{0<|k|\leq n-1}
    \sqrt{4\pi|k|}
    \left[
        \frac{A_k(\lambda)}{\Gamma(1+\alpha_k)}f_k
        +
        \frac{B_k(\lambda)}{\Gamma(1-\alpha_k)}f_k^\#
    \right].
\end{aligned}
\end{equation}

Similarly, using the coordinate \(y=\rho e^{i\eta}\) near \(\infty\), we
define \(\pi_\infty(u_\lambda)\in V_\infty\) by retaining the leading
critical asymptotic terms of \(v_\lambda(\rho,\eta)\) as
\(\rho\to0^+\). More precisely, suppose that the radial coefficients of the
critical modes have asymptotic expansions
\[
    v_0(\rho;\lambda)
    =
    d_0(\lambda)
    +
    d_0^\#(\lambda)\log\rho
    +
    O(\rho^{2n}\log\rho)
\]
and, for \(0<|k|\leq n-1\),
\[
    v_k(\rho;\lambda)
    =
    d_k(\lambda)\rho^{|k|}
    +
    d_k^\#(\lambda)\rho^{-|k|}
    +
    O(\rho^{2n-|k|})
    \qquad
    \text{as }\rho\to0^+.
\]
We then set
\[
    \pi_\infty(u_\lambda)
    =
    d_0(\lambda)
    +
    d_0^\#(\lambda)\log|y|
    +
    \sum_{0<|k|\leq n-1}
    \left(
        d_k(\lambda)\rho^{|k|}e^{ik\eta}
        +
        d_k^\#(\lambda)\rho^{-|k|}e^{ik\eta}
    \right),
    \qquad
    y=\rho e^{i\eta}.
\]
Equivalently,
\[
\begin{aligned}
    \pi_\infty(u_\lambda)
    &=
    d_0(\lambda)
    +
    d_0^\#(\lambda)\log|y|
    \\
    &\quad
    +
    \sum_{k=1}^{n-1}
    \left(
        d_k(\lambda)y^k
        +
        d_{-k}(\lambda)\bar y^k
        +
        d_{-k}^\#(\lambda)y^{-k}
        +
        d_k^\#(\lambda)\bar y^{-k}
    \right).
\end{aligned}
\]
Suppose that the critical modes at \(\infty\) are written as
\[
    v_0(\rho;\lambda)
    =
    A_0^\infty(\lambda)
    P_\nu\left(\frac{1-\rho^{2n}}{1+\rho^{2n}}\right)
    +
    B_0^\infty(\lambda)
    Q_\nu\left(\frac{1-\rho^{2n}}{1+\rho^{2n}}\right)
\]
and, for \(0<|k|\leq n-1\),
\[
    v_k(\rho;\lambda)
    =
    A_k^\infty(\lambda)
    P_\nu^{-\alpha_k}
    \left(\frac{1-\rho^{2n}}{1+\rho^{2n}}\right)
    +
    B_k^\infty(\lambda)
    P_\nu^{\alpha_k}
    \left(\frac{1-\rho^{2n}}{1+\rho^{2n}}\right).
\]
Applying the preceding asymptotic formulas in the coordinate
\(y=\rho e^{i\eta}\), we obtain
\[
\begin{aligned}
    \pi_\infty(u_\lambda)
    &=
    A_0^\infty(\lambda)
    -
    B_0^\infty(\lambda)
    \bigl(\gamma_E+\psi(\nu+1)\bigr)
    -
    nB_0^\infty(\lambda)\log|y|
    \\
    &\quad
    +
    \sum_{k=1}^{n-1}
    \left[
        \frac{A_k^\infty(\lambda)}{\Gamma(1+k/n)}y^k
        +
        \frac{A_{-k}^\infty(\lambda)}{\Gamma(1+k/n)}
        \bar y^k
    \right]
    \\
    &\quad
    +
    \sum_{k=1}^{n-1}
    \left[
        \frac{B_{-k}^\infty(\lambda)}{\Gamma(1-k/n)}
        y^{-k}
        +
        \frac{B_k^\infty(\lambda)}{\Gamma(1-k/n)}
        \bar y^{-k}
    \right].
\end{aligned}
\]
Equivalently, in terms of the bases \(g_j\) and \(g_j^\#\), we have
\begin{equation}\label{eq:pi-infty-critical-asymptotics}
\begin{aligned}
    \pi_\infty(u_\lambda)
    &=
    \sqrt{2\pi}
    \left[
        A_0^\infty(\lambda)
        -
        B_0^\infty(\lambda)
        \bigl(\gamma_E+\psi(\nu+1)\bigr)
    \right]g_0
    +
    n\sqrt{2\pi}\,B_0^\infty(\lambda)g_0^\#
    \\
    &\quad
    +
    \sum_{0<|k|\leq n-1}
    \sqrt{4\pi|k|}
    \left[
        \frac{A_k^\infty(\lambda)}
             {\Gamma(1+\alpha_k)}g_k
        +
        \frac{B_k^\infty(\lambda)}
             {\Gamma(1-\alpha_k)}g_k^\#
    \right].
\end{aligned}
\end{equation}

We now compare the two boundary descriptions of the same formal solution.
The coordinate change \(y=x^{-1}\) gives \(\rho=r^{-1}\) and
\(\eta=-\theta\). Consequently, the \(k\)-th Fourier mode in the
\(y\)-coordinate is determined by the \((-k)\)-th Fourier mode in the
\(x\)-coordinate. The Legendre coefficients appearing in
\(\pi_\infty(u_\lambda)\) can therefore be expressed in terms of those
appearing in \(\pi_0(u_\lambda)\) by means of the Legendre connection
formulas.

\begin{lem}\label{lem:legendre-coefficient-connection}
Let \(\lambda=\nu(\nu+1)\). The Legendre coefficients of the critical modes
of \(u_\lambda\) at \(\infty\) are related to those at \(0\) as follows.

For the zero mode,
\[
    \begin{pmatrix}
        A_0^\infty(\lambda)\\[0.3em]
        B_0^\infty(\lambda)
    \end{pmatrix}
    =
    M_0(\nu)
    \begin{pmatrix}
        A_0(\lambda)\\[0.3em]
        B_0(\lambda)
    \end{pmatrix},
\]
where
\[
    M_0(\nu)
    =
    \begin{pmatrix}
        \cos\pi\nu
        &
        -\dfrac{\pi}{2}\sin\pi\nu
        \\[0.8em]
        -\dfrac{2}{\pi}\sin\pi\nu
        &
        -\cos\pi\nu
    \end{pmatrix}.
\]

For \(0<|k|\leq n-1\), set \(\alpha_k=|k|/n\). Then
\[
    \begin{pmatrix}
        A_k^\infty(\lambda)\\[0.3em]
        B_k^\infty(\lambda)
    \end{pmatrix}
    =
    M_k(\nu)
    \begin{pmatrix}
        A_{-k}(\lambda)\\[0.3em]
        B_{-k}(\lambda)
    \end{pmatrix},
\]
where
\[
    M_k(\nu)
    =
    \begin{pmatrix}
    \displaystyle
        \frac{\sin\pi\nu}{\sin\pi\alpha_k}
    &
    \displaystyle
        \frac{\Gamma(\nu+\alpha_k+1)}
             {\Gamma(\nu-\alpha_k+1)}
        \frac{\sin\pi(\nu+\alpha_k)}
             {\sin\pi\alpha_k}
    \\[1.2em]
    \displaystyle
        -
        \frac{\Gamma(\nu-\alpha_k+1)}
             {\Gamma(\nu+\alpha_k+1)}
        \frac{\sin\pi(\nu-\alpha_k)}
             {\sin\pi\alpha_k}
    &
    \displaystyle
        -
        \frac{\sin\pi\nu}{\sin\pi\alpha_k}
    \end{pmatrix}.
\]
The entries are understood by analytic continuation at any removable
singularities. Moreover,
\[
    \det M_k(\nu)=-1
    \qquad
    \text{for every } |k|\leq n-1.
\]
\end{lem}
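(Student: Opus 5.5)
The plan is to compare the two Fourier expansions of the same formal solution on \(X'\) and then apply the Ferrers reflection formulas \(t\mapsto -t\) from \cite[\S14.9]{DLMF}.

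\emph{Step 1: identify the Legendre variables.} Since \(\rho=r^{-1}\) and \(\eta=-\theta\), we have \(v(\rho,\eta)=u(\rho^{-1},-\eta)\). Comparing Fourier coefficients gives
\[
v_k(\rho;\lambda)=u_{-k}(\rho^{-1};\lambda).
\]
Moreover \(t_\infty:=(1-\rho^{2n})/(1+\rho^{2n})=-t\), where \(t=(1-r^{2n})/(1+r^{2n})\). Also \(\alpha_k=\alpha_{-k}\). Hence
\[
v_k=A_{-k}P^{-\alpha_k}_\nu(-t_\infty)+B_{-k}P^{\alpha_k}_\nu(-t_\infty)
\]
for \(k\neq0\), and the zero mode is handled the same way with \(P_\nu,Q_\nu\). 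It then suffices to rewrite \(P^{\pm\alpha}_\nu(-s)\) (respectively \(P_\nu(-s)\), \(Q_\nu(-s)\)) in the basis \(P^{-\alpha}_\nu(s),P^{\alpha}_\nu(s)\) (respectively \(P_\nu(s),Q_\nu(s)\)). By linear independence the coefficients of \(v_k\) are then determined, so the matrix of this change of basis is \(M_k(\nu)\).

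\emph{Step 2: the zero mode.} Use the standard formulas
\[
P_\nu(-s)=\cos\pi\nu\,P_\nu(s)-\tfrac{2}{\pi}\sin\pi\nu\,Q_\nu(s),
\qquad
Q_\nu(-s)=-\cos\pi\nu\,Q_\nu(s)-\tfrac{\pi}{2}\sin\pi\nu\,P_\nu(s).
\]
These are DLMF 14.9.10 and 14.9.12 with \(\mu=0\). Collecting coefficients gives \(M_0(\nu)\) directly, and \(\det M_0=-\cos^2\pi\nu-\sin^2\pi\nu=-1\).

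\emph{Step 3: the critical nonzero modes.} Write \(\alpha=\alpha_k\in(0,1)\), so \(\alpha\notin\mb Z\). Start from
\[
P^\mu_\nu(-s)=\cos\pi(\nu+\mu)P^\mu_\nu(s)-\tfrac{2}{\pi}\sin\pi(\nu+\mu)Q^\mu_\nu(s).
\]
Eliminate \(Q^\mu_\nu\) using
\[
Q^\mu_\nu=\frac{\pi}{2\sin\pi\mu}\Bigl(\cos\pi\mu\,P^\mu_\nu-\frac{\Gamma(\nu+\mu+1)}{\Gamma(\nu-\mu+1)}P^{-\mu}_\nu\Bigr),
\]
which is DLMF 14.9.2 solved for \(Q\). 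Substituting, the coefficient of \(P^\mu_\nu(s)\) becomes
\[
\frac{\cos\pi(\nu+\mu)\sin\pi\mu-\sin\pi(\nu+\mu)\cos\pi\mu}{\sin\pi\mu}=-\frac{\sin\pi\nu}{\sin\pi\mu},
\]
and the coefficient of \(P^{-\mu}_\nu(s)\) is
\[
\frac{\Gamma(\nu+\mu+1)}{\Gamma(\nu-\mu+1)}\,\frac{\sin\pi(\nu+\mu)}{\sin\pi\mu}.
\]
Apply this with \(\mu=\alpha\) and with \(\mu=-\alpha\). Reading off coefficients in the basis \((P^{-\alpha}_\nu,P^\alpha_\nu)\) gives the stated entries of \(M_k(\nu)\):
\begin{itemize}
\item the \(\mu=-\alpha\) case gives the first column, \(\sin\pi\nu/\sin\pi\alpha\) and \(-\frac{\Gamma(\nu-\alpha+1)}{\Gamma(\nu+\alpha+1)}\frac{\sin\pi(\nu-\alpha)}{\sin\pi\alpha}\);
\item the \(\mu=\alpha\) case gives the second column.
\end{itemize}

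\emph{Step 4: determinant and singularities.} The determinant is
\[
-\frac{\sin^2\pi\nu-\sin\pi(\nu+\alpha)\sin\pi(\nu-\alpha)}{\sin^2\pi\alpha}.
\]
By the identity \(\sin(a+b)\sin(a-b)=\sin^2a-\sin^2b\), this equals \(-1\).

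Poles of the Gamma ratios in \(\nu\) are not genuine singularities. At such points the products with the vanishing factors (or the corresponding vanishing of \(P^{\pm\alpha}_\nu\), whose normalization carries \(1/\Gamma\)) are removable. The identities hold on a dense open set of \(\nu\), so they extend by analytic continuation. This is the "understood by analytic continuation" clause.

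The main obstacle is bookkeeping: getting the signs and Gamma-ratio orientation in DLMF 14.9.2 right, and making sure the role swap \(k\leftrightarrow-k\) lands the coefficients in the correct column. I would sanity-check the result by verifying that \(M_k(\nu)^2=I\). This must hold because applying \(t\mapsto-t\) twice is the identity, and it is consistent with \(\det=-1\) and trace \(0\).
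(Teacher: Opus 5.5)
Your proposal is correct and follows essentially the same route as the paper: you identify $v_k(\rho)=u_{-k}(\rho^{-1})$ with the reflection $t\mapsto -t$, apply the Ferrers reflection formulas mode by mode, and obtain the determinant from $\sin\pi(\nu+\alpha)\sin\pi(\nu-\alpha)=\sin^2\pi\nu-\sin^2\pi\alpha$. The one difference is that you derive the formulas for $P^{\pm\alpha}_\nu(-t)$ from the reflection formula for $P^\mu_\nu(-t)$ together with the expression of $Q^\mu_\nu$ in terms of $P^{\pm\mu}_\nu$, whereas the paper simply quotes them; your derivation is correct, including the signs, the Gamma-ratio orientation, and the column placement.
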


\begin{proof}
For the zero mode, the connection formulas are
\[
    P_\nu(-t)
    =
    \cos\pi\nu\,P_\nu(t)
    -
    \frac{2}{\pi}\sin\pi\nu\,Q_\nu(t)
\]
and
\[
    Q_\nu(-t)
    =
    -
    \frac{\pi}{2}\sin\pi\nu\,P_\nu(t)
    -
    \cos\pi\nu\,Q_\nu(t).
\]
These give the matrix \(M_0(\nu)\).

For \(0<|k|\leq n-1\), write \(\alpha=\alpha_k\). Since
\(0<\alpha<1\), the functions \(P_\nu^{-\alpha}\) and
\(P_\nu^\alpha\) form a fundamental system. Their connection formulas are
\[
\begin{aligned}
    P_\nu^{-\alpha}(-t)
    &=
    \frac{\sin\pi\nu}{\sin\pi\alpha}
    P_\nu^{-\alpha}(t)
    \\
    &\quad
    -
    \frac{\Gamma(\nu-\alpha+1)}
         {\Gamma(\nu+\alpha+1)}
    \frac{\sin\pi(\nu-\alpha)}
         {\sin\pi\alpha}
    P_\nu^\alpha(t)
\end{aligned}
\]
and
\[
\begin{aligned}
    P_\nu^\alpha(-t)
    &=
    \frac{\Gamma(\nu+\alpha+1)}
         {\Gamma(\nu-\alpha+1)}
    \frac{\sin\pi(\nu+\alpha)}
         {\sin\pi\alpha}
    P_\nu^{-\alpha}(t)
    \\
    &\quad
    -
    \frac{\sin\pi\nu}{\sin\pi\alpha}
    P_\nu^\alpha(t).
\end{aligned}
\]
Since the coordinate change \(y=x^{-1}\) sends the \((-k)\)-th Fourier mode
in the \(x\)-coordinate to the \(k\)-th Fourier mode in the \(y\)-coordinate,
these formulas give the matrix \(M_k(\nu)\).

Finally,
\[
    \det M_0(\nu)
    =
    -\cos^2\pi\nu-\sin^2\pi\nu
    =
    -1.
\]
For \(0<|k|\leq n-1\), the gamma factors cancel, and hence
\[
\begin{aligned}
    \det M_k(\nu)
    &=
    \frac{
        -\sin^2\pi\nu
        +
        \sin\pi(\nu+\alpha_k)
        \sin\pi(\nu-\alpha_k)
    }{
        \sin^2\pi\alpha_k
    }
    \\
    &=
    -1,
\end{aligned}
\]
where we have used
\[
    \sin\pi(\nu+\alpha_k)\sin\pi(\nu-\alpha_k)
    =
    \sin^2\pi\nu-\sin^2\pi\alpha_k.
\]
\end{proof}

Let
\(
    E_n
    =
    \bigoplus_{|k|\leq n-1}\mb C^2.
\)
For \(\mf a=(\mf a_k)_{|k|\leq n-1}\in E_n\), write
\(\mf a_k=(A_k,B_k)^t\). Choose \(\nu\) such that
\(\lambda=\nu(\nu+1)\) and
\(\nu\notin\{-1,-2,\ldots\}\). This entails no restriction on \(\lambda\),
since \(\nu\) may be replaced by \(-\nu-1\).

Define \(\mc A_0^\nu:E_n\to V_0\) by
\[
\begin{aligned}
    \mc A_0^\nu(\mf a)
    &=
    \sqrt{2\pi}
    \left[
        A_0
        -
        B_0\bigl(\gamma_E+\psi(\nu+1)\bigr)
    \right]f_0
    +
    n\sqrt{2\pi}\,B_0 f_0^\#
    \\
    &\quad
    +
    \sum_{0<|k|\leq n-1}
    \sqrt{4\pi|k|}
    \left[
        \frac{A_k}{\Gamma(1+\alpha_k)}f_k
        +
        \frac{B_k}{\Gamma(1-\alpha_k)}f_k^\#
    \right].
\end{aligned}
\]
Similarly, define \(\mc A_\infty^\nu:E_n\to V_\infty\) by
\[
\begin{aligned}
    \mc A_\infty^\nu(\mf a)
    &=
    \sqrt{2\pi}
    \left[
        A_0
        -
        B_0\bigl(\gamma_E+\psi(\nu+1)\bigr)
    \right]g_0
    +
    n\sqrt{2\pi}\,B_0 g_0^\#
    \\
    &\quad
    +
    \sum_{0<|k|\leq n-1}
    \sqrt{4\pi|k|}
    \left[
        \frac{A_k}{\Gamma(1+\alpha_k)}g_k
        +
        \frac{B_k}{\Gamma(1-\alpha_k)}g_k^\#
    \right].
\end{aligned}
\]

\begin{lem}\label{lem:boundary-connection-map}
Define a linear map \(\mc M_\nu:E_n\to E_n\) by
\[
    (\mc M_\nu\mf a)_0
    =
    M_0(\nu)\mf a_0
\]
and, for \(0<|k|\leq n-1\), by
\[
    (\mc M_\nu\mf a)_k
    =
    M_k(\nu)\mf a_{-k}.
\]
Then
\[
    \mc C_\nu
    =
    \mc A_\infty^\nu
    \circ
    \mc M_\nu
    \circ
    (\mc A_0^\nu)^{-1}
    :
    V_0\longrightarrow V_\infty
\]
is a linear isomorphism. Moreover, every formal solution \(u_\lambda\) of
\eqref{eq:model}, with \(\lambda=\nu(\nu+1)\), satisfies
\[
    \pi_\infty(u_\lambda)
    =
    \mc C_\nu\pi_0(u_\lambda).
\]
\end{lem}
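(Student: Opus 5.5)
The argument splits into two parts: first show that \(\mc C_\nu\) is an isomorphism by factoring it, then verify the gluing identity mode by mode.

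\emph{Isomorphism.} Both \(V_0\) and \(V_\infty\) have dimension \(2(2n-1)\), which equals \(\dim E_n\). So it suffices to show that each factor of \(\mc C_\nu\) is invertible.

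For \(\mc A_0^\nu\), the matrix with respect to the basis \(\{f_j,f_j^\#\}\) is block diagonal in \(k\).
\begin{itemize}
\item The \(k=0\) block is \(\begin{pmatrix}\sqrt{2\pi}&-\sqrt{2\pi}(\gamma_E+\psi(\nu+1))\\0&n\sqrt{2\pi}\end{pmatrix}\), which is upper triangular with nonzero diagonal. It is well defined because \(\nu\notin\{-1,-2,\ldots\}\), so \(\psi(\nu+1)\) is finite.
\item For \(0<|k|\le n-1\), the block is diagonal with entries \(\sqrt{4\pi|k|}/\Gamma(1\pm\alpha_k)\). These are finite and nonzero since \(0<\alpha_k<1\).
\end{itemize}
The same computation applies verbatim to \(\mc A_\infty^\nu\).

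For \(\mc M_\nu\), it acts on \(E_n\) by the block \(M_0(\nu)\) on the zero component and by sending the \(-k\) component to the \(k\) component via \(M_k(\nu)\). Hence it is a permutation of the components composed with blocks of determinant \(-1\) (Lemma~\ref{lem:legendre-coefficient-connection}), and so it is invertible.

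One point needs a remark. The entries of \(M_k(\nu)\) may have removable singularities, for instance when \(\nu\pm\alpha_k\) hits a pole of a gamma factor. By the lemma they are interpreted by analytic continuation, and the determinant identity persists by continuity.

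\emph{Gluing identity.} Let \(u_\lambda\) be a formal solution. Write its critical modes at \(0\) with coefficients \(\mf a=(A_k,B_k)^t_{|k|\le n-1}\), and at \(\infty\) with coefficients \(\mf a^\infty=(A^\infty_k,B^\infty_k)^t\).

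By \eqref{eq:pi0-critical-asymptotics} we have \(\pi_0(u_\lambda)=\mc A_0^\nu(\mf a)\), and by \eqref{eq:pi-infty-critical-asymptotics} we have \(\pi_\infty(u_\lambda)=\mc A_\infty^\nu(\mf a^\infty)\). It therefore suffices to prove \(\mf a^\infty=\mc M_\nu\mf a\), which is exactly the content of Lemma~\ref{lem:legendre-coefficient-connection}.

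The one step requiring care is justifying that lemma's mode identification, and this is where I expect the main subtlety. Under \(\rho=r^{-1}\) and \(\eta=-\theta\), one has \(t\mapsto -t\), so
\[
v_k(\rho)=\frac{1}{2\pi}\int u(\rho^{-1},-\eta)e^{-ik\eta}\,d\eta=u_{-k}(\rho^{-1}).
\]
The radial ODE for mode \(k\) at \(\infty\) is the associated Legendre equation of order \(\alpha_{-k}=\alpha_k\) in the variable \(-t\). Substituting the connection formulas for \(P_\nu^{\mp\alpha}(-t)\), and for \(P_\nu(-t)\), \(Q_\nu(-t)\), into \(u_{-k}\) and comparing coefficients of the fundamental system then yields \(M_k(\nu)\) and \(M_0(\nu)\).

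Combining these gives \(\pi_\infty(u_\lambda)=\mc A_\infty^\nu\mc M_\nu(\mc A_0^\nu)^{-1}\pi_0(u_\lambda)=\mc C_\nu\pi_0(u_\lambda)\).
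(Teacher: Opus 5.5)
Your proposal is correct and follows essentially the same route as the paper. You show that \(\mc A_0^\nu\) and \(\mc A_\infty^\nu\) are isomorphisms (triangular zero-mode block, diagonal nonzero-mode blocks with nonzero entries) and that \(\mc M_\nu\) is a permutation of summands composed with blocks of determinant \(-1\). You then reduce the gluing identity to Lemma~\ref{lem:legendre-coefficient-connection} via \(\pi_0(u_\lambda)=\mc A_0^\nu(\mf a)\) and \(\pi_\infty(u_\lambda)=\mc A_\infty^\nu(\mc M_\nu\mf a)\), exactly as the paper does; your extra remarks (finiteness of \(\psi(\nu+1)\) and \(\Gamma(1\pm\alpha_k)\), and the check \(v_k(\rho)=u_{-k}(\rho^{-1})\)) make explicit what the paper leaves implicit or places in the proof of that preceding lemma.
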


\begin{proof}
The maps \(\mc A_0^\nu\) and \(\mc A_\infty^\nu\) are isomorphisms. Indeed,
their matrices in the chosen bases are block diagonal with respect to the
Fourier modes, except for the triangular zero-mode block, and all their
diagonal entries are nonzero.

The map
\[
    (\mf a_k)_{|k|\leq n-1}
    \longmapsto
    (\mf a_{-k})_{|k|\leq n-1}
\]
is a permutation of the summands of \(E_n\). By
Lemma~\ref{lem:legendre-coefficient-connection}, each matrix \(M_k(\nu)\)
is invertible. Hence \(\mc M_\nu\) is an isomorphism, and therefore so is
\(\mc C_\nu\).

If \(u_\lambda\) has Legendre coefficient vector \(\mf a\) at \(0\), then
\[
    \pi_0(u_\lambda)=\mc A_0^\nu(\mf a).
\]
Lemma~\ref{lem:legendre-coefficient-connection} shows that its Legendre
coefficient vector at \(\infty\) is \(\mc M_\nu\mf a\). Therefore
\[
\begin{aligned}
    \pi_\infty(u_\lambda)
    &=
    \mc A_\infty^\nu(\mc M_\nu\mf a)
    \\
    &=
    \mc A_\infty^\nu
    \circ
    \mc M_\nu
    \circ
    (\mc A_0^\nu)^{-1}
    \pi_0(u_\lambda)
    \\
    &=
    \mc C_\nu\pi_0(u_\lambda).
\end{aligned}
\]
\end{proof}

We conclude this section by recording the corresponding boundary map for a
general element of the maximal domain. Let
\(u\in\mc D_{\max}(\Delta)\). Then \(\Delta u=F\) for some
\(F\in L^2(X',dA)\) in the sense of distributions. By the general
asymptotic decomposition for conic Laplacians, \(u\) admits critical
asymptotic expansions at the two conic points. We define
\(\pi_0(u)\in V_0\) and \(\pi_\infty(u)\in V_\infty\) by retaining the
leading critical asymptotic terms in the coordinates \(x\) and \(y\),
respectively.

We use the same vector space
\(E_n=\bigoplus_{|k|\leq n-1}\mb C^2\), now regarded as the boundary
coefficient space. For
\(\mf a=(\mf a_k)_{|k|\leq n-1}\in E_n\), write
\(\mf a_k=(a_k,a_k^\#)^t\), and define
\[
    \mc A_0:E_n\longrightarrow V_0,
    \qquad
    \mc A_0(\mf a)
    =
    \sum_{|k|\leq n-1}
    \left(
        a_kf_k+a_k^\#f_k^\#
    \right).
\]
Similarly, for
\(\mf b=(\mf b_k)_{|k|\leq n-1}\in E_n\), with
\(\mf b_k=(b_k,b_k^\#)^t\), define
\[
    \mc A_\infty:E_n\longrightarrow V_\infty,
    \qquad
    \mc A_\infty(\mf b)
    =
    \sum_{|k|\leq n-1}
    \left(
        b_kg_k+b_k^\#g_k^\#
    \right).
\]
The maps \(\mc A_0\) and \(\mc A_\infty\) are linear isomorphisms.
Consequently, there exist unique \(\mf a,\mf b\in E_n\) such that
\[
    \pi_0(u)=\mc A_0(\mf a),
    \qquad
    \pi_\infty(u)=\mc A_\infty(\mf b).
\]
Thus \(\mf a\) and \(\mf b\) are the normalized boundary coefficient
vectors of the critical asymptotic expansions of \(u\) at \(0\) and
\(\infty\), respectively.

Define
\[
    \pi_S
    =
    \pi_0\oplus\pi_\infty:
    \mc D_{\max}(\Delta)
    \longrightarrow
    V_S.
\]
The general asymptotic decomposition of the maximal domain and the
construction of the boundary map for conic Laplacians were established in
\cite[Section~2]{LiouKrein}. Applied to the present model, those results imply
that \(\pi_S\) is surjective and that
\[
    \ker\pi_S=\mc D_{\min}(\Delta).
\]
Consequently, \(\pi_S\) induces a linear isomorphism
\[
    \bar\pi_S:
    \mc D_{\max}(\Delta)/\mc D_{\min}(\Delta)
    \longrightarrow
    V_S,
    \qquad
    \bar\pi_S([u])=\pi_S(u).
\]
Our purpose here is not to repeat the general construction, but to identify
the boundary map explicitly for the projective model.

The preceding Legendre connection formulas now admit a direct interpretation
in terms of the boundary data spaces. Although connection matrices between
local bases of solutions are classical in the theory of Fuchsian and
hypergeometric equations, in the present setting they acquire a concrete
geometric meaning. The maps \(\mc A_0^\nu\) and
\(\mc A_\infty^\nu\) identify the mode-wise Legendre coefficient space
\(E_n\) with the boundary data spaces \(V_0\) and \(V_\infty\) determined by
the critical asymptotic expansions at the conic points \(0\) and
\(\infty\), respectively.

The map \(\mc M_\nu\) is the mode-wise connection map relating the
coefficients of the same formal solution in the two affine coordinates. The
appearance of \(\mf a_{-k}\) reflects the reversal of angular modes under the
coordinate transformation \(y=x^{-1}\). Therefore
\[
    \mc C_\nu
    =
    \mc A_\infty^\nu
    \circ
    \mc M_\nu
    \circ
    (\mc A_0^\nu)^{-1}
\]
is the induced map from the boundary data at \(0\) to those at \(\infty\).
It specifies how the boundary data at the two conic points must be matched in
order to arise from a single formal solution of the model equation on
\(\mb P^1\). In this sense, the classical Legendre connection formulas are
realized in the projective conic model as geometric gluing laws for critical
asymptotic boundary data.

\section{Self-Adjoint Extensions of the Laplacian and the Friedrichs Spectrum}

In this section, we describe the self-adjoint extensions of the Laplacian and
compute the spectrum of its Friedrichs extension.

The Green form is the skew-Hermitian sesquilinear form
\[
    \mk q:
    \mc D_{\max}(\Delta)\times\mc D_{\max}(\Delta)
    \longrightarrow
    \mb C
\]
defined by
\[
    \mk q(u,v)
    =
    \langle\Delta u,v\rangle_{L^2(X',dA)}
    -
    \langle u,\Delta v\rangle_{L^2(X',dA)}.
\]
Its radical is
\[
    \operatorname{rad}\mk q
    =
    \left\{
        v\in\mc D_{\max}(\Delta):
        \mk q(u,v)=0
        \text{ for every }
        u\in\mc D_{\max}(\Delta)
    \right\},
\]
which is precisely \(\mc D_{\min}(\Delta)\). Hence \(\mk q\) descends to a
nondegenerate skew-Hermitian form on
\(\mc D_{\max}(\Delta)/\mc D_{\min}(\Delta)\).

Under the identification
\[
    \bar\pi_S:
    \mc D_{\max}(\Delta)/\mc D_{\min}(\Delta)
    \longrightarrow
    V_S
\]
induced by \(\pi_S\), we denote the resulting form on \(V_S\) by
\(\omega_S\).

With the normalizations of the critical asymptotic bases chosen above, the
induced forms on \(V_0\) and \(V_\infty\) are characterized by
\[
    \omega_0(f_j,f_k^\#)=\delta_{jk},
    \qquad
    \omega_0(f_j,f_k)
    =
    \omega_0(f_j^\#,f_k^\#)
    =
    0,
\]
and
\[
    \omega_\infty(g_j,g_k^\#)=\delta_{jk},
    \qquad
    \omega_\infty(g_j,g_k)
    =
    \omega_\infty(g_j^\#,g_k^\#)
    =
    0,
\]
for \(|j|,|k|\leq n-1\), with the remaining values determined by
skew-Hermitianity. In particular,
\[
    \omega_0(f_j^\#,f_k)=-\delta_{jk},
    \qquad
    \omega_\infty(g_j^\#,g_k)=-\delta_{jk}.
\]
Under the decomposition \(V_S=V_0\oplus V_\infty\), we have
\[
    \omega_S=\omega_0\oplus\omega_\infty.
\]
We call \(\omega_S\) the boundary symplectic form on \(V_S\).

For \(u,v\in C^\infty(X')\cap\mc D_{\max}(\Delta)\), Green's formula gives
\[
    \mk q(u,v)
    =
    \omega_0\bigl(\pi_0(u),\pi_0(v)\bigr)
    +
    \omega_\infty\bigl(\pi_\infty(u),\pi_\infty(v)\bigr).
\]
Both sides are continuous with respect to the graph norm on
\(\mc D_{\max}(\Delta)\). Hence this identity extends to all
\(u,v\in\mc D_{\max}(\Delta)\). Equivalently,
\[
    \mk q(u,v)
    =
    \omega_S\bigl(\pi_S(u),\pi_S(v)\bigr)
\]
for all \(u,v\in\mc D_{\max}(\Delta)\).

\begin{defn}
For a subspace \(\mc V\subset V_S\), its symplectic orthogonal complement is
\[
    \mc V^{\perp_{\omega_S}}
    =
    \bigl\{
        \xi\in V_S:
        \omega_S(\xi,\eta)=0
        \text{ for every }\eta\in\mc V
    \bigr\}.
\]
A subspace \(\mc V\subset V_S\) is called Lagrangian if
\(\mc V=\mc V^{\perp_{\omega_S}}\). We denote by
\(\operatorname{LGr}(V_S,\omega_S)\) the Lagrangian Grassmannian of
\((V_S,\omega_S)\), that is, the set of all Lagrangian subspaces of
\(V_S\).
\end{defn}

In the previous section, we introduced the subspaces \(\mc F_P\) and
\(\mc F_P^\#\) for \(P\in S\). Set
\(\mc F=\mc F_0\oplus\mc F_\infty\) and
\(\mc F^\#=\mc F_0^\#\oplus\mc F_\infty^\#\). Then both
\(\mc F\) and \(\mc F^\#\) belong to
\(\operatorname{LGr}(V_S,\omega_S)\), and they are complementary:
\[
    V_S=\mc F\oplus\mc F^\#.
\]
We call \((\mc F,\mc F^\#)\) a Lagrangian pair and refer to
\(\mc F\) as the Friedrichs Lagrangian subspace of \(V_S\). More generally,
a pair \((\mc V,\mc V^\#)\) of Lagrangian subspaces of
\((V_S,\omega_S)\) is called a Lagrangian pair if
\(V_S=\mc V\oplus\mc V^\#\).

We now describe the self-adjoint extensions of
\((\Delta_{\min},\mc D_{\min}(\Delta))\), where
\(\Delta_{\min}=\Delta|_{\mc D_{\min}(\Delta)}\). For each
\(\mc V\in\operatorname{LGr}(V_S,\omega_S)\), define
\[
    \mc D_{\mc V}
    =
    \bigl\{
        u\in\mc D_{\max}(\Delta):
        \pi_S(u)\in\mc V
    \bigr\},
\]
and set \(\Delta_{\mc V}=\Delta|_{\mc D_{\mc V}}\). Since \(\mc V\) is Lagrangian,
\((\Delta_{\mc V},\mc D_{\mc V})\) is a self-adjoint extension of
\((\Delta_{\min},\mc D_{\min}(\Delta))\).

Conversely, let \((\widetilde\Delta,\mc D)\) be a self-adjoint extension of
\((\Delta_{\min},\mc D_{\min}(\Delta))\), with
\(\mc D\subset\mc D_{\max}(\Delta)\). Then \(\pi_S(\mc D)\) is a
Lagrangian subspace of \((V_S,\omega_S)\). Moreover, since
\(\ker\pi_S=\mc D_{\min}(\Delta)\), there exists a unique
\(\mc V\in\operatorname{LGr}(V_S,\omega_S)\) such that
\(\mc D=\mc D_{\mc V}\), and hence
\((\widetilde\Delta,\mc D)
=(\Delta_{\mc V},\mc D_{\mc V})\).
Thus the self-adjoint extensions of
\((\Delta_{\min},\mc D_{\min}(\Delta))\) are in bijection with
\(\operatorname{LGr}(V_S,\omega_S)\). For the general theory of closed extensions of elliptic cone operators, see
\cite{GilMendoza,GilKrainerMendoza}.

By standard elliptic theory for conic operators, each
\(\Delta_{\mc V}\) has compact resolvent. Consequently, its spectrum consists
of real eigenvalues of finite multiplicity and has no finite accumulation
point. Moreover, since \(\Delta_{\min}\) is nonnegative and
\(\mc D_{\max}(\Delta)/\mc D_{\min}(\Delta)\) is finite-dimensional, every
self-adjoint extension \(\Delta_{\mc V}\) is bounded from below; see
\cite{GilKrainerMendoza}.

Lemma~\ref{lem:large-mode-regularity-condition} shows that, for
\(|k|\geq n\), a nonzero square-integrable formal solution in the \(k\)-th
mode exists only when, up to the symmetry \(\nu\mapsto-\nu-1\), one has
\(\nu=\alpha_k+q\) for some \(q\in\mb Z_{\geq0}\). For
\(k\in\mb Z\) with \(|k|\geq n\), define
\(\Psi_{q,k}\in\mc D_{\max}(\Delta)\) by
\[
    \Psi_{q,k}(r,\theta)
    =
    \Psi_k(r,\theta;\lambda_{q,k}),
    \qquad
    \lambda_{q,k}
    =
    (\alpha_k+q)(\alpha_k+q+1).
\]
The functions \(\Psi_{q,k}\) have no critical asymptotic boundary terms at
either conic point. Hence \(\pi_S(\Psi_{q,k})=0\), and therefore
\(\Psi_{q,k}\in\mc D_{\min}(\Delta)\). Since
\(\mc D_{\min}(\Delta)\subset\mc D_{\mc V}\) for every
\(\mc V\in\operatorname{LGr}(V_S,\omega_S)\), each \(\Psi_{q,k}\) is an
eigenfunction of every self-adjoint extension \(\Delta_{\mc V}\), with
eigenvalue \(\lambda_{q,k}\).

To determine the remaining part of the Friedrichs spectrum
\(\sigma(\Delta_{\mc F})\), it remains to consider the formal solutions
\(u_\lambda\) of \eqref{eq:model} satisfying the Friedrichs boundary
condition \(\pi_S(u_\lambda)\in\mc F\). Since
\(\mc F=\mc F_0\oplus\mc F_\infty\), this condition is equivalent to
\(\pi_0(u_\lambda)\in\mc F_0\) and
\(\pi_\infty(u_\lambda)\in\mc F_\infty\). By
\eqref{eq:pi0-critical-asymptotics} and
\eqref{eq:pi-infty-critical-asymptotics}, the Friedrichs boundary condition
is equivalent to
\[
    B_k(\lambda)=B_k^\infty(\lambda)=0
    \qquad
    \text{for all } |k|\leq n-1.
\]

For the zero mode, the Friedrichs boundary condition gives
\(B_0(\lambda)=B_0^\infty(\lambda)=0\). Since
\[
    \begin{pmatrix}
        A_0^\infty(\lambda)\\
        B_0^\infty(\lambda)
    \end{pmatrix}
    =
    M_0(\nu)
    \begin{pmatrix}
        A_0(\lambda)\\
        0
    \end{pmatrix},
\]
we obtain
\(A_0^\infty(\lambda)=\cos\pi\nu\,A_0(\lambda)\) and
\(B_0^\infty(\lambda)=-(2/\pi)\sin\pi\nu\,A_0(\lambda)\). Hence, for a
nonzero zero-mode solution, the condition \(B_0^\infty(\lambda)=0\) is
equivalent to \(\sin\pi\nu=0\). Up to the symmetry
\(\nu\mapsto-\nu-1\), we may therefore write
\(\nu=q\in\mb Z_{\geq0}\). Thus
\(\lambda_{q,0}=q(q+1)\), and
\[
    \Psi_{q,0}(r,\theta)
    =
    P_q\left(\frac{1-r^{2n}}{1+r^{2n}}\right),
    \qquad
    q\in\mb Z_{\geq0},
\]
is an eigenfunction of \(\Delta_{\mc F}\) with eigenvalue
\(\lambda_{q,0}\).

Now let \(m\in\mb Z\) satisfy \(0<|m|\leq n-1\). The Friedrichs boundary
condition gives \(B_m(\lambda)=0\) and
\(B_{-m}^\infty(\lambda)=0\). Since the \((-m)\)-mode at \(\infty\) is
related to the \(m\)-mode at \(0\) by
\[
    \begin{pmatrix}
        A_{-m}^\infty(\lambda)\\
        B_{-m}^\infty(\lambda)
    \end{pmatrix}
    =
    M_m(\nu)
    \begin{pmatrix}
        A_m(\lambda)\\
        0
    \end{pmatrix},
\]
we have
\[
    B_{-m}^\infty(\lambda)
    =
    -
    \frac{\Gamma(\nu-\alpha_m+1)}
         {\Gamma(\nu+\alpha_m+1)}
    \frac{\sin\pi(\nu-\alpha_m)}
         {\sin\pi\alpha_m}
    A_m(\lambda).
\]
By the reflection formula for the gamma function, the coefficient on the
right-hand side can be rewritten as
\[
    \frac{\pi}
    {
        \sin\pi\alpha_m\,
        \Gamma(\alpha_m-\nu)
        \Gamma(\nu+\alpha_m+1)
    }.
\]
Therefore, for \(A_m(\lambda)\neq0\), the condition
\(B_{-m}^\infty(\lambda)=0\) is equivalent to
\[
    \frac{1}
    {
        \Gamma(\alpha_m-\nu)
        \Gamma(\nu+\alpha_m+1)
    }
    =
    0.
\]
Since the gamma function has poles precisely at the nonpositive integers,
this holds if and only if
\[
    \nu=\alpha_m+q
    \qquad\text{or}\qquad
    \nu=-\alpha_m-q-1
\]
for some \(q\in\mb Z_{\geq0}\). These two possibilities are related by the
symmetry \(\nu\mapsto-\nu-1\). Hence, up to this symmetry, we may write
\(\nu=\alpha_m+q\), and therefore
\[
    \lambda_{q,m}
    =
    (\alpha_m+q)(\alpha_m+q+1).
\]
Relabeling \(m\) as \(k\), the corresponding Friedrichs eigenfunctions are
\[
    \Psi_{q,k}(r,\theta)
    =
    P_{\alpha_k+q}^{-\alpha_k}
    \left(\frac{1-r^{2n}}{1+r^{2n}}\right)e^{ik\theta},
    \qquad
    0<|k|\leq n-1,\quad q\in\mb Z_{\geq0}.
\]
They belong to \(\mc D_{\mc F}\) and satisfy
\[
    \Delta_{\mc F}\Psi_{q,k}
    =
    \lambda_{q,k}\Psi_{q,k},
    \qquad
    \lambda_{q,k}
    =
    (\alpha_k+q)(\alpha_k+q+1).
\]

We conclude with the following description of the Friedrichs spectrum.

\begin{thm}\label{thm:friedrichs-spectrum}
For \(k\in\mb Z\), set \(\alpha_k=|k|/n\), and, for \(k\neq0\), let
\(\mu_k\) be as defined in Section~1. The spectrum of
\(\Delta_{\mc F}\), as a set, is
\[
    \sigma(\Delta_{\mc F})
    =
    \left\{
        \lambda_{q,k}
        =
        (\alpha_k+q)(\alpha_k+q+1)
        :
        q\in\mb Z_{\geq0},\ k\in\mb Z
    \right\}.
\]
For \(k=0\), an eigenfunction corresponding to \(\lambda_{q,0}=q(q+1)\)
is
\[
    \Psi_{q,0}(r,\theta)
    =
    P_q\left(\frac{1-r^{2n}}{1+r^{2n}}\right),
    \qquad
    q\in\mb Z_{\geq0}.
\]
For \(k\neq0\), an eigenfunction corresponding to
\(\lambda_{q,k}\) is
\[
    \Psi_{q,k}(r,\theta)
    =
    P_{\alpha_k+q}^{\mu_k}
    \left(\frac{1-r^{2n}}{1+r^{2n}}\right)e^{ik\theta},
    \qquad
    q\in\mb Z_{\geq0}.
\]
Moreover, the family
\[
    \bigl\{
        \Psi_{q,k}:
        q\in\mb Z_{\geq0},\ k\in\mb Z
    \bigr\}
\]
is an orthogonal system in \(L^2(X',dA)\), and its linear span is dense in
\(L^2(X',dA)\).
\end{thm}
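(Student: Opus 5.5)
The plan is to reduce everything to the Fourier decomposition and the one-dimensional Sturm--Liouville problems already analyzed, and then to use completeness of Legendre-type eigenfunctions in each mode.

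\emph{Step 1 (eigenfunctions lie in \(\mc D_{\mc F}\) and are the only ones).} That each \(\Psi_{q,k}\) is an eigenfunction of \(\Delta_{\mc F}\) has been verified above: for \(|k|\geq n\) via Lemma~\ref{lem:large-mode-regularity-condition} and \(\pi_S(\Psi_{q,k})=0\), and for \(|k|\leq n-1\) via the Friedrichs conditions \(B_k=B_k^\infty=0\) together with Lemma~\ref{lem:legendre-coefficient-connection}. For \(k\neq0\) with \(|k|<n\) we have \(\mu_k=-\alpha_k\), which matches the stated formula. For the converse inclusion of spectra, take an eigenfunction \(u\) of \(\Delta_{\mc F}\) with eigenvalue \(\lambda\). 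Since \(\Delta\) commutes with rotations \(\theta\mapsto\theta+c\), and \(\mc F\) is rotation invariant (the \(f_j,g_j\) are weight vectors), each Fourier component \(u_k e^{ik\theta}\) is again in \(\mc D_{\mc F}\). Each such component satisfies \((\Delta-\lambda)=0\). Choose a nonzero component \(u_k\). The mode analysis (Lemma~\ref{lem:large-mode-regularity-condition} for \(|k|\geq n\), and the zero-mode and critical-mode computations above) then forces \(\lambda=\lambda_{q,k}\). Because \(\Delta_{\mc F}\) has compact resolvent, the spectrum consists only of eigenvalues, so this gives the set equality.

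\emph{Step 2 (orthogonality).} Distinct \(k\) are orthogonal by integrating in \(\theta\). For fixed \(k\) and \(q\neq q'\), the two eigenvalues differ because \(\nu\mapsto\nu(\nu+1)\) is strictly increasing on \(\nu\geq0\). Since \(\Delta_{\mc F}\) is self-adjoint, eigenfunctions with different eigenvalues are orthogonal. Alternatively, after substituting \(t=(1-r^{2n})/(1+r^{2n})\), the measure \(dA\) becomes \(d\theta\,dt\) up to the constant \(n\), so this reduces to the classical orthogonality of \(P^{\mu}_{\alpha+q}\) on \((-1,1)\) for fixed order.

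\emph{Step 3 (density), which I expect to be the main point.} By the change of variables \(t=\cos\phi\) and \(R=r^n\), the space \(L^2(X',dA)\) is unitarily identified, up to constants, with \(\bigoplus_k L^2((-1,1),dt)\otimes e^{ik\theta}\). It therefore suffices to show that, for each fixed \(k\), the functions \(\{P^{\mu_k}_{\alpha_k+q}(t)\}_{q\geq0}\) span a dense subspace of \(L^2(-1,1)\). I would argue as follows. For fixed \(k\), the restriction of \(\Delta_{\mc F}\) to the \(k\)-th mode is a self-adjoint operator with compact resolvent on the \(k\)-th summand, since it is the compression of \(\Delta_{\mc F}\) to a reducing subspace. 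Its eigenvectors are therefore complete in that summand. By the uniqueness in Step 1, every such eigenvector is a multiple of some \(\Psi_{q,k}\): the solution space of the mode ODE subject to the boundary conditions is one-dimensional, because one coefficient of the fundamental system is killed at \(0\). Summing over \(k\) then gives density.

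An independent check is available through Gegenbauer polynomials. We have \(P^{-\alpha}_{\alpha+q}(t)\propto(1-t^2)^{\alpha/2}C_q^{(\alpha+1/2)}(t)\), so completeness reduces to the density of polynomials in \(L^2\bigl((1-t^2)^{\alpha}dt\bigr)\), which holds because the interval is compact. The care needed is to confirm that the reducing-subspace argument is legitimate, namely that \(\mc D_{\mc F}\) is invariant under Fourier projections. This follows from the rotation invariance of \(\mc F\) noted in Step 1.
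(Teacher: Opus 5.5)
Your proposal is correct and follows essentially the same route as the paper. The eigenfunctions are identified mode by mode: the \(L^2\)-condition handles \(|k|\geq n\), and the Friedrichs conditions together with the Legendre connection matrices handle \(|k|\leq n-1\). Density then comes from reducing \(\Delta_{\mc F}\) to each Fourier subspace \(H_k\) and applying the spectral theorem to the resulting one-dimensional Legendre-type operator on \(L^2((-1,1),dt)\).

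Several of your details are left implicit in the paper and make the argument more self-contained: the rotation-equivariance argument that \(\mc D_{\mc F}\) is invariant under Fourier projections, the one-dimensionality of each eigenspace within a mode, and the independent Gegenbauer check, which reduces completeness to density of polynomials in \(L^2((1-t^2)^{\alpha_k}dt)\).
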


The completeness of the preceding eigenfunctions follows from separation of
variables. The Hilbert space \(L^2(X',dA)\) admits the orthogonal
decomposition
\[
    L^2(X',dA)
    =
    \widehat{\bigoplus}_{k\in\mb Z}H_k,
\]
where
\[
    H_k
    =
    \bigl\{
        u_k(r)e^{ik\theta}:
        u_k\in L^2_{\mathrm{rad}}
    \bigr\}
\]
and
\[
    L^2_{\mathrm{rad}}
    =
    L^2\left(
        (0,\infty),
        \frac{4n^2r^{2n-1}}{(1+r^{2n})^2}\,dr
    \right).
\]
The Friedrichs Laplacian reduces each Fourier subspace \(H_k\). For each
fixed \(k\), its radial part is a one-dimensional Friedrichs
Sturm--Liouville operator. Under the change of variables
\[
    t=\frac{1-r^{2n}}{1+r^{2n}},
\]
the radial operator becomes
\[
    -
    \frac{d}{dt}
    \left(
        (1-t^2)\frac{d}{dt}
    \right)
    +
    \frac{\alpha_k^2}{1-t^2}
\]
on \(L^2((-1,1),dt)\).

For \(k=0\), the Friedrichs eigenfunctions of the transformed radial
operator are \(P_q(t)\), \(q\in\mb Z_{\geq0}\). For \(k\neq0\), they are
\(P_{\alpha_k+q}^{\mu_k}(t)\), \(q\in\mb Z_{\geq0}\). By the spectral
theorem for the corresponding one-dimensional Friedrichs
Sturm--Liouville operator, these eigenfunctions form a complete orthogonal
system in the \(k\)-th radial space. Taking the orthogonal sum over all
Fourier modes, we conclude that
\[
    \bigl\{
        \Psi_{q,k}:
        q\in\mb Z_{\geq0},\ k\in\mb Z
    \bigr\}
\]
is a complete orthogonal system in \(L^2(X',dA)\).

We next compute the \(L^2(X',dA)\)-norms of these eigenfunctions.

\begin{lem}\label{lem:radial-change-of-variables}
Let \(F:[-1,1]\to\mb C\) be continuous. Then
\[
    \int_0^\infty
    F\left(\frac{1-r^{2n}}{1+r^{2n}}\right)
    \frac{4n^2r^{2n-1}}{(1+r^{2n})^2}\,dr
    =
    n\int_{-1}^1F(t)\,dt.
\]
\end{lem}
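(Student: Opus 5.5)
The identity follows from the single substitution \(t=(1-r^{2n})/(1+r^{2n})\), which is the same change of variables used earlier to reduce the radial equation to the associated Legendre equation. The plan is to check that this map is a smooth, strictly monotone bijection from \((0,\infty)\) onto \((-1,1)\), and to compute its Jacobian.

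First, write \(t=\tau(r)=(1-r^{2n})/(1+r^{2n})=-1+2/(1+r^{2n})\). Then \(\tau\) is smooth on \((0,\infty)\) with \(\tau(r)\to1\) as \(r\to0^+\) and \(\tau(r)\to-1\) as \(r\to\infty\). Differentiating gives
\[
    \tau'(r)
    =
    -\frac{2\cdot 2n r^{2n-1}}{(1+r^{2n})^2}
    =
    -\frac{4nr^{2n-1}}{(1+r^{2n})^2}<0 .
\]
So \(\tau\) is a decreasing diffeomorphism \((0,\infty)\to(-1,1)\). The key observation is that
\[
    \frac{4n^2r^{2n-1}}{(1+r^{2n})^2}\,dr=-n\,\tau'(r)\,dr=-n\,dt .
\]

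Next, apply the substitution on a compact interval \([\varepsilon,R]\):
\[
    \int_\varepsilon^R F(\tau(r))\,\frac{4n^2r^{2n-1}}{(1+r^{2n})^2}\,dr
    =
    -n\int_{\tau(\varepsilon)}^{\tau(R)}F(t)\,dt
    =
    n\int_{\tau(R)}^{\tau(\varepsilon)}F(t)\,dt .
\]
The orientation reversal absorbs the minus sign.

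The only point needing care is passing to the limits \(\varepsilon\to0^+\) and \(R\to\infty\). Since \(F\) is continuous on the compact interval \([-1,1]\), it is bounded, so the right-hand side converges to \(n\int_{-1}^1F(t)\,dt\). On the left, the integrand is dominated by \(\|F\|_\infty\) times the weight \(4n^2r^{2n-1}(1+r^{2n})^{-2}\). This weight is integrable on \((0,\infty)\), with total integral \(n\cdot 2=2n\) by the same computation applied to \(F\equiv1\). Dominated (or monotone) convergence therefore shows that the improper integral on the left converges absolutely and equals the limit. No step is a genuine obstacle; the only bookkeeping is the sign from the orientation reversal and the factor \(n\) relating the weight to \(|dt|\).
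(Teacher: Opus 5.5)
Your proposal is correct and takes the same approach as the paper, whose proof is just the substitution \(t=(1-r^{2n})/(1+r^{2n})\). You additionally supply the details the paper leaves implicit: the Jacobian \(dt=-4nr^{2n-1}(1+r^{2n})^{-2}\,dr\), the sign from the orientation reversal, and the justification for passing to the improper limits at \(0\) and \(\infty\).
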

\begin{proof}
This follows from the change of variables
\(t=(1-r^{2n})/(1+r^{2n})\).
\end{proof}

\begin{lem}\label{lem:norm-of-friedrichs-eigenfunctions}
The \(L^2(X',dA)\)-norm of \(\Psi_{q,k}\) is
\[
    \|\Psi_{q,k}\|_{L^2(X',dA)}^2
    =
    \begin{cases}
    \displaystyle
        \frac{4\pi n}{2q+1},
        & k=0,
        \\[1em]
    \displaystyle
        \frac{4\pi n}{2q+2\alpha_k+1}
        \frac{
            \Gamma(\alpha_k+q+\mu_k+1)
        }{
            \Gamma(\alpha_k+q-\mu_k+1)
        },
        & k\neq0.
    \end{cases}
\]
\end{lem}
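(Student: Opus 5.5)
The plan is to reduce the two-dimensional integral to a one-dimensional Ferrers-function integral on \((-1,1)\), and then use classical orthogonality normalizations.

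\textbf{Step 1: reduction to \((-1,1)\).} In polar coordinates \(x=re^{i\theta}\), the area form is \(\frac{4n^2r^{2n-1}}{(1+r^{2n})^2}\,dr\,d\theta\), and \(|e^{ik\theta}|=1\). The angular integral therefore contributes a factor \(2\pi\). The relevant Ferrers functions are real-valued on \((-1,1)\) for the real parameters \(\nu=\alpha_k+q\) and \(\mu_k\), so \(|\Psi_{q,k}|^2\) is the square of a real function of \(t\). Lemma~\ref{lem:radial-change-of-variables} then gives
\[
    \|\Psi_{q,k}\|^2_{L^2(X',dA)}
    =
    2\pi n\int_{-1}^1\bigl(P^{\mu_k}_{\alpha_k+q}(t)\bigr)^2\,dt ,
\]
with \(P_q\) in place of \(P^{\mu_k}_{\alpha_k+q}\) when \(k=0\). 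Lemma~\ref{lem:radial-change-of-variables} is stated for continuous \(F\) on \([-1,1]\), so I first check that \(F=(P^{\mu_k}_{\alpha_k+q})^2\) extends continuously to \([-1,1]\). This follows from Step 3 below, where the function is written as \((1-t^2)^{\alpha/2}\) times a polynomial. It therefore suffices to prove
\[
    \int_{-1}^1\bigl(P^{\mu}_{\nu}\bigr)^2\,dt
    =
    \frac{2}{2\nu+1}\,\frac{\Gamma(\nu+\mu+1)}{\Gamma(\nu-\mu+1)},
    \qquad
    \nu=\alpha_k+q .
\]

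\textbf{Step 2: the classical cases.} For \(k=0\), the function is the Legendre polynomial \(P_q\), and \(\int_{-1}^1 P_q^2\,dt=2/(2q+1)\). For \(\alpha_k=m\in\mb Z_{>0}\), we have \(\mu_k=m\) and \(\nu=m+q\in\mb Z\). Here \(P^m_{m+q}\) is a classical associated Legendre function, with \(\int_{-1}^1 (P_\ell^m)^2\,dt=\frac{2}{2\ell+1}\frac{(\ell+m)!}{(\ell-m)!}\). Both cases give the stated formula directly.

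\textbf{Step 3: the non-integer case.} This is the main case: \(\alpha=\alpha_k\notin\mb Z\), \(\mu_k=-\alpha\), \(\nu=\alpha+q\), and the target value is \(\frac{2}{2\nu+1}\frac{q!}{\Gamma(q+2\alpha+1)}\). I would start from the hypergeometric representation used in Section~2,
\[
    P^{-\alpha}_\nu(t)
    =
    \frac{1}{\Gamma(1+\alpha)}
    \left(\frac{1-t}{1+t}\right)^{\alpha/2}
    {}_2F_1\!\left(-\nu,\nu+1;1+\alpha;\tfrac{1-t}{2}\right).
\]
Applying Euler's transformation \({}_2F_1(a,b;c;x)=(1-x)^{c-a-b}{}_2F_1(c-a,c-b;c;x)\) with \(c-a-b=\alpha\) gives
\[
    P^{-\alpha}_{\alpha+q}(t)
    =
    \frac{2^{-\alpha}}{\Gamma(1+\alpha)}\,(1-t^2)^{\alpha/2}\,
    {}_2F_1\!\left(-q,\,q+2\alpha+1;\,1+\alpha;\,\tfrac{1-t}{2}\right).
\]
The hypergeometric factor terminates and equals \(\frac{q!}{(1+\alpha)_q}P^{(\alpha,\alpha)}_q(t)\), a Jacobi polynomial. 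Hence \((P^{-\alpha}_{\alpha+q})^2\) is \((1-t^2)^{\alpha}\) times a polynomial squared, which also confirms the continuity needed in Step 1. The integral then reduces to the Jacobi norm
\[
    \int_{-1}^1(1-t^2)^{\alpha}\bigl(P_q^{(\alpha,\alpha)}\bigr)^2\,dt
    =
    \frac{2^{2\alpha+1}}{2q+2\alpha+1}\,
    \frac{\Gamma(q+\alpha+1)^2}{q!\,\Gamma(q+2\alpha+1)}.
\]
Combining the constants, I expect the factor \(\Gamma(q+\alpha+1)^2/\bigl(\Gamma(1+\alpha)(1+\alpha)_q\bigr)^2=1\) and the powers of \(2\) to cancel. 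This would leave \(\frac{2}{2\nu+1}\frac{q!}{\Gamma(q+2\alpha+1)}\), which is the claimed value.

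As a cross-check, I would compare with the DLMF Ferrers normalization \cite[\S14.17]{DLMF}, which covers \(\nu+\mu\in\mb Z_{\geq0}\). Multiplying the one-dimensional result by \(2\pi n\) from Step 1 gives the stated norms, since \(2\pi n\cdot\frac{2}{2\nu+1}=\frac{4\pi n}{2q+2\alpha_k+1}\).

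The main obstacle is Step 3: tracking the normalization constants (the powers of \(2\), the factor \(1/\Gamma(1+\alpha)\), and the Jacobi leading coefficient) so that they cancel exactly. The reduction and the integer cases are routine.
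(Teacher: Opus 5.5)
Your proposal is correct and follows the paper's proof. Both arguments reduce via Lemma~\ref{lem:radial-change-of-variables} to \(2\pi n\int_{-1}^1 |P^{\mu_k}_{\alpha_k+q}(t)|^2\,dt\) and then apply the Ferrers normalization integral. The only difference is in the non-integer-order case: the paper simply quotes that normalization, whereas you derive it from the identity \(P^{-\alpha}_{\alpha+q}(t)=\frac{2^{-\alpha}q!}{\Gamma(1+\alpha+q)}(1-t^2)^{\alpha/2}P^{(\alpha,\alpha)}_q(t)\) and the Jacobi norm. Your constants check out, and this derivation also supplies the continuity needed for the change-of-variables lemma.
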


\begin{proof}
Define the radial part of \(\Psi_{q,k}\) by
\[
    \Phi_{q,k}(r)
    =
    \begin{cases}
    \displaystyle
        P_q\left(\frac{1-r^{2n}}{1+r^{2n}}\right),
        & k=0,
        \\[1em]
    \displaystyle
        P_{\alpha_k+q}^{\mu_k}
        \left(\frac{1-r^{2n}}{1+r^{2n}}\right),
        & k\neq0.
    \end{cases}
\]
Thus \(\Psi_{q,k}(r,\theta)=\Phi_{q,k}(r)e^{ik\theta}\). Since
\(|e^{ik\theta}|=1\), we have
\[
\begin{aligned}
    \|\Psi_{q,k}\|_{L^2(X',dA)}^2
    &=
    \int_{X'}|\Psi_{q,k}|^2\,dA
    \\
    &=
    2\pi
    \int_0^\infty
    |\Phi_{q,k}(r)|^2
    \frac{4n^2r^{2n-1}}{(1+r^{2n})^2}\,dr.
\end{aligned}
\]
By Lemma~\ref{lem:radial-change-of-variables},
\[
    \|\Psi_{q,k}\|_{L^2(X',dA)}^2
    =
    \begin{cases}
    \displaystyle
        2\pi n\int_{-1}^1|P_q(t)|^2\,dt,
        & k=0,
        \\[1em]
    \displaystyle
        2\pi n\int_{-1}^1
        \left|
            P_{\alpha_k+q}^{\mu_k}(t)
        \right|^2\,dt,
        & k\neq0.
    \end{cases}
\]
Using
\[
    \int_{-1}^1|P_q(t)|^2\,dt
    =
    \frac{2}{2q+1}
\]
and, for \(k\neq0\),
\[
    \int_{-1}^1
    \left|
        P_{\alpha_k+q}^{\mu_k}(t)
    \right|^2\,dt
    =
    \frac{2}{2q+2\alpha_k+1}
    \frac{
        \Gamma(\alpha_k+q+\mu_k+1)
    }{
        \Gamma(\alpha_k+q-\mu_k+1)
    },
\]
we obtain the asserted formula.
\end{proof}

\section{Formal Solutions on the Resolvent Set and the Weyl Function}

Let \((\mc V,\mc V^\#)\) be a Lagrangian pair in
\((V_S,\omega_S)\). Let \(P_{\mc V}:V_S\to\mc V\) and
\(P_{\mc V^\#}:V_S\to\mc V^\#\) be the projections associated with the
decomposition \(V_S=\mc V\oplus\mc V^\#\). Define
\[
    \widetilde\pi_{\mc V}
    =
    P_{\mc V}\circ\pi_S:
    \mc D_{\max}(\Delta)\longrightarrow\mc V,
    \qquad
    \widetilde\pi_{\mc V^\#}
    =
    P_{\mc V^\#}\circ\pi_S:
    \mc D_{\max}(\Delta)\longrightarrow\mc V^\#.
\]

Let \(\lambda\in\rho(\Delta_{\mc V})\) and
\(h^\#\in\mc V^\#\). We seek a formal solution
\(u_\lambda\in\mc D_{\max}(\Delta)\) of \eqref{eq:model} satisfying
\(\widetilde\pi_{\mc V^\#}(u_\lambda)=h^\#\).

Such a solution is unique. Indeed, suppose that \(u_\lambda\) and
\(v_\lambda\) are two formal solutions satisfying
\(\widetilde\pi_{\mc V^\#}(u_\lambda)
=\widetilde\pi_{\mc V^\#}(v_\lambda)=h^\#\). Then
\(w_\lambda=u_\lambda-v_\lambda\) satisfies
\((\Delta-\lambda)w_\lambda=0\) and
\(\widetilde\pi_{\mc V^\#}(w_\lambda)=0\). Hence
\(\pi_S(w_\lambda)\in\mc V\), so
\(w_\lambda\in\mc D_{\mc V}\). Therefore
\((\Delta_{\mc V}-\lambda)w_\lambda=0\). Since
\(\lambda\in\rho(\Delta_{\mc V})\), it follows that
\(w_\lambda=0\), and hence \(u_\lambda=v_\lambda\).

We now turn to the existence of such a formal solution. The idea is to start
with an arbitrary element having the prescribed boundary data and then use
the resolvent to remove its failure to satisfy the homogeneous equation.

Since \(\pi_S\) is surjective, so is
\(\widetilde\pi_{\mc V^\#}\). Hence there exists
\(u\in\mc D_{\max}(\Delta)\) such that
\(\widetilde\pi_{\mc V^\#}(u)=h^\#\). To see how the desired solution should
be constructed, let \(u,u'\in\mc D_{\max}(\Delta)\) be two elements
satisfying
\[
    \widetilde\pi_{\mc V^\#}(u)
    =
    \widetilde\pi_{\mc V^\#}(u')
    =
    h^\#.
\]
Since \(u\) and \(u'\) have the same \(\mc V^\#\)-boundary component, their
difference satisfies
\(\widetilde\pi_{\mc V^\#}(u-u')=0\). Hence
\(\pi_S(u-u')\in\mc V\), and therefore
\(u-u'\in\mc D_{\mc V}\). It follows that
\[
    (\Delta-\lambda)u-(\Delta-\lambda)u'
    =
    (\Delta_{\mc V}-\lambda)(u-u').
\]
Applying \(R_{\mc V}(\lambda)=(\lambda-\Delta_{\mc V})^{-1}\) to this
identity, we obtain
\[
    R_{\mc V}(\lambda)(\Delta-\lambda)u
    -
    R_{\mc V}(\lambda)(\Delta-\lambda)u'
    =
    -u+u'.
\]
Consequently,
\[
    u+R_{\mc V}(\lambda)(\Delta-\lambda)u
    =
    u'+R_{\mc V}(\lambda)(\Delta-\lambda)u'.
\]
This identity shows that the expression
\[
    u+R_{\mc V}(\lambda)(\Delta-\lambda)u
\]
depends only on the prescribed boundary value \(h^\#\), and not on the
choice of the lift \(u\in\mc D_{\max}(\Delta)\). This naturally leads us to
define
\[
    u_\lambda
    =
    u+R_{\mc V}(\lambda)(\Delta-\lambda)u,
\]
where \(u\) is any element satisfying
\(\widetilde\pi_{\mc V^\#}(u)=h^\#\).

We now verify that \(u_\lambda\) is the desired formal solution. Since
\(R_{\mc V}(\lambda)(\Delta-\lambda)u\in\mc D_{\mc V}\), its
\(\mc V^\#\)-boundary component vanishes. Thus
\[
    \widetilde\pi_{\mc V^\#}(u_\lambda)
    =
    \widetilde\pi_{\mc V^\#}(u)
    =
    h^\#.
\]
Moreover,
\[
\begin{aligned}
    (\Delta-\lambda)u_\lambda
    &=
    (\Delta-\lambda)u
    +
    (\Delta_{\mc V}-\lambda)
    R_{\mc V}(\lambda)(\Delta-\lambda)u
    \\
    &=
    (\Delta-\lambda)u-(\Delta-\lambda)u
    =
    0.
\end{aligned}
\]
Therefore \(u_\lambda\) is the required formal solution of
\eqref{eq:model} with prescribed \(\mc V^\#\)-boundary component \(h^\#\).
Together with the uniqueness proved above, this shows that such a formal
solution exists uniquely.

This observation provides a natural projection onto the space of formal
solutions. For each \(\lambda\in\rho(\Delta_{\mc V})\), define
\[
    Y_{\mc V}(\lambda):
    \mc D_{\max}(\Delta)\longrightarrow\mc D_{\max}(\Delta)
\]
by
\(
    Y_{\mc V}(\lambda)u
    =
    u+R_{\mc V}(\lambda)(\Delta-\lambda)u.
\)
By the defining property of the resolvent,
\(Y_{\mc V}(\lambda)\) annihilates precisely the elements of
\(\mc D_{\mc V}\), while its image consists exactly of the formal solutions
of \eqref{eq:model}. Consequently,
\(\ker Y_{\mc V}(\lambda)=\mc D_{\mc V}\) and
\(\operatorname{Im}Y_{\mc V}(\lambda)=\ker(\Delta-\lambda)\).

Moreover, \(Y_{\mc V}(\lambda)\) acts as the identity on
\(\ker(\Delta-\lambda)\). Since its image is
\(\ker(\Delta-\lambda)\), it follows that \(Y_{\mc V}(\lambda)\) is a
projection. It therefore induces a linear isomorphism
\[
    \overline{Y}_{\mc V}(\lambda):
    \mc D_{\max}(\Delta)/\mc D_{\mc V}
    \longrightarrow
    \ker(\Delta-\lambda).
\]

Similarly, since
\(\widetilde{\pi}_{\mc V^\#}:
\mc D_{\max}(\Delta)\to\mc V^\#\)
is surjective with kernel \(\mc D_{\mc V}\), it induces a linear
isomorphism
\[
    \overline{\widetilde{\pi}}_{\mc V^\#}:
    \mc D_{\max}(\Delta)/\mc D_{\mc V}
    \longrightarrow
    \mc V^\#.
\]
Combining these two identifications yields the linear isomorphism
\[
    \Gamma_{\mc V,\mc V^\#}(\lambda):
    \mc V^\#
    \longrightarrow
    \ker(\Delta-\lambda),
    \qquad
    \Gamma_{\mc V,\mc V^\#}(\lambda)
    =
    \overline{Y}_{\mc V}(\lambda)
    \circ
    \overline{\widetilde{\pi}}_{\mc V^\#}^{-1}.
\]
Thus \(\Gamma_{\mc V,\mc V^\#}(\lambda)\) identifies prescribed
\(\mc V^\#\)-boundary data with the corresponding formal solution in
\(\ker(\Delta-\lambda)\). In particular,
\(\dim_{\mb C}\ker(\Delta-\lambda)=\dim_{\mb C}\mc V^\#\).
Since this space is finite-dimensional, \(Y_{\mc V}(\lambda)\) is a
finite-rank projection.

Under this identification, a formal solution is determined by its
\(\mc V^\#\)-boundary component, while its complementary
\(\mc V\)-boundary component depends linearly on the prescribed
\(\mc V^\#\)-component. This dependence is encoded by the Weyl function.

\begin{defn}
Let \((\mc V,\mc V^\#)\) be a Lagrangian pair and let
\(\lambda\in\rho(\Delta_{\mc V})\). The Weyl function associated with the
Lagrangian pair \((\mc V,\mc V^\#)\) is the linear map
\(M_{\mc V,\mc V^\#}(\lambda):\mc V^\#\to\mc V\) defined by
\[
    M_{\mc V,\mc V^\#}(\lambda)
    =
    \left.
        \widetilde{\pi}_{\mc V}
    \right|_{\ker(\Delta-\lambda)}
    \circ
    \Gamma_{\mc V,\mc V^\#}(\lambda).
\]
Equivalently, for \(h^\#\in\mc V^\#\),
\[
    M_{\mc V,\mc V^\#}(\lambda)h^\#
    =
    \widetilde{\pi}_{\mc V}
    \bigl(
        \Gamma_{\mc V,\mc V^\#}(\lambda)h^\#
    \bigr).
\]
\end{defn}

Since
\(
    \widetilde{\pi}_{\mc V^\#}
    \bigl(
        \Gamma_{\mc V,\mc V^\#}(\lambda)h^\#
    \bigr)
    =
    h^\#,
\)
it follows directly from the definition that, for every
\(h^\#\in\mc V^\#\),
\[
    \pi_S
    \bigl(
        \Gamma_{\mc V,\mc V^\#}(\lambda)h^\#
    \bigr)
    =
    M_{\mc V,\mc V^\#}(\lambda)h^\#
    +
    h^\#.
\]

\begin{defn}
Let \((\mc V,\mc V^\#)\) be a Lagrangian pair, and let
\(\lambda\in\rho(\Delta_{\mc V})\). Choose symplectically dual ordered
bases \(\beta=(e_1,\ldots,e_d)\) of \(\mc V\) and
\(\beta^\#=(e_1^\#,\ldots,e_d^\#)\) of \(\mc V^\#\), so that
\(\omega_S(e_i,e_j^\#)=\delta_{ij}\). Write
\[
    M_{\mc V,\mc V^\#}(\lambda)e_j^\#
    =
    \sum_{k=1}^d
    m_{kj}^{\mc V,\mc V^\#}(\lambda)e_k.
\]
The matrix
\[
    S_{\mc V,\mc V^\#}^{\beta,\beta^\#}(\lambda)
    :=
    \bigl[
        M_{\mc V,\mc V^\#}(\lambda)
    \bigr]_{\beta^\#}^{\beta}
    =
    \bigl[
        m_{kj}^{\mc V,\mc V^\#}(\lambda)
    \bigr]_{k,j=1}^d
\]
is called the \(S\)-matrix associated with the Lagrangian pair
\((\mc V,\mc V^\#)\), with respect to the domain basis \(\beta^\#\) and
the codomain basis \(\beta\).
\end{defn}

The general Weyl-function construction in this boundary-symplectic setting
was developed in \cite{LiouKrein}.

We now make the preceding constructions explicit for the Friedrichs
Lagrangian pair \((\mc F,\mc F^\#)\). Let
\(\lambda\in\rho(\Delta_{\mc F})\), and choose \(\nu\) so that
\(\lambda=\nu(\nu+1)\).

Define \(u_\lambda^0\in\mc D_{\max}(\Delta)\) by
\[
    u_\lambda^0(r,\theta)
    =
    \frac{1}{n\sqrt{2\pi}}
    \left[
        Q_\nu\left(\frac{1-r^{2n}}{1+r^{2n}}\right)
        -
        \frac{\pi}{2}\cot(\pi\nu)
        P_\nu\left(\frac{1-r^{2n}}{1+r^{2n}}\right)
    \right].
\]
Then \(u_\lambda^0\) is the unique formal solution of
\eqref{eq:model} satisfying
\(\widetilde{\pi}_{\mc F^\#}(u_\lambda^0)=f_0^\#\).
Consequently,
\(\Gamma_{\mc F,\mc F^\#}(\lambda)f_0^\#=u_\lambda^0\).

Since
\[
    \pi_S(u_\lambda^0)
    =
    -
    \frac{
        \gamma_E+\psi(\nu+1)
        +\frac{\pi}{2}\cot(\pi\nu)
    }{n}
    f_0
    -
    \frac{\pi}{2n\sin(\pi\nu)}g_0
    +
    f_0^\#,
\]
we obtain
\[
    M_{\mc F,\mc F^\#}(\lambda)f_0^\#
    =
    -
    \frac{
        \gamma_E+\psi(\nu+1)
        +\frac{\pi}{2}\cot(\pi\nu)
    }{n}
    f_0
    -
    \frac{\pi}{2n\sin(\pi\nu)}g_0.
\]

For \(k\in\mb Z\) with \(0<|k|\leq n-1\), define
\(u_\lambda^k\in\mc D_{\max}(\Delta)\) by
\[
    u_\lambda^k(r,\theta)
    =
    \frac{\Gamma(1-\alpha_k)}{\sqrt{4\pi|k|}}
    \left[
        P_\nu^{\alpha_k}\left(
            \frac{1-r^{2n}}{1+r^{2n}}
        \right)
        -
        \frac{\Gamma(\nu+\alpha_k+1)}
             {\Gamma(\nu-\alpha_k+1)}
        \frac{\sin(\pi\nu)}
             {\sin\bigl(\pi(\nu-\alpha_k)\bigr)}
        P_\nu^{-\alpha_k}\left(
            \frac{1-r^{2n}}{1+r^{2n}}
        \right)
    \right]
    e^{ik\theta}.
\]
Then \(u_\lambda^k\) is the unique formal solution satisfying
\(\widetilde{\pi}_{\mc F^\#}(u_\lambda^k)=f_k^\#\). Consequently,
\(\Gamma_{\mc F,\mc F^\#}(\lambda)f_k^\#=u_\lambda^k\).

Since
\[
\begin{aligned}
    \pi_S(u_\lambda^k)
    ={}&
    -
    \frac{\Gamma(1-\alpha_k)}{\Gamma(1+\alpha_k)}
    \frac{\Gamma(\nu+\alpha_k+1)}
         {\Gamma(\nu-\alpha_k+1)}
    \frac{\sin(\pi\nu)}
         {\sin\bigl(\pi(\nu-\alpha_k)\bigr)}
    f_k
    +
    f_k^\#
    \\[2mm]
    &\quad
    -
    \frac{\Gamma(1-\alpha_k)}{\Gamma(1+\alpha_k)}
    \frac{\Gamma(\nu+\alpha_k+1)}
         {\Gamma(\nu-\alpha_k+1)}
    \frac{\sin(\pi\alpha_k)}
         {\sin\bigl(\pi(\nu-\alpha_k)\bigr)}
    g_{-k},
\end{aligned}
\]
we obtain
\[
\begin{aligned}
    M_{\mc F,\mc F^\#}(\lambda)f_k^\#
    ={}&
    -
    \frac{\Gamma(1-\alpha_k)}{\Gamma(1+\alpha_k)}
    \frac{\Gamma(\nu+\alpha_k+1)}
         {\Gamma(\nu-\alpha_k+1)}
    \frac{\sin(\pi\nu)}
         {\sin\bigl(\pi(\nu-\alpha_k)\bigr)}
    f_k
    \\[2mm]
    &\quad
    -
    \frac{\Gamma(1-\alpha_k)}{\Gamma(1+\alpha_k)}
    \frac{\Gamma(\nu+\alpha_k+1)}
         {\Gamma(\nu-\alpha_k+1)}
    \frac{\sin(\pi\alpha_k)}
         {\sin\bigl(\pi(\nu-\alpha_k)\bigr)}
    g_{-k}.
\end{aligned}
\]
The expressions involving
\(\sin\bigl(\pi(\nu-\alpha_k)\bigr)\) are understood by meromorphic
continuation at removable singularities.

Define \(v_\lambda^0\in\mc D_{\max}(\Delta)\) by
\[
    v_\lambda^0(r,\theta)
    =
    \frac{1}{n\sqrt{2\pi}}
    \left[
        Q_\nu\left(\frac{r^{2n}-1}{1+r^{2n}}\right)
        -
        \frac{\pi}{2}\cot(\pi\nu)
        P_\nu\left(\frac{r^{2n}-1}{1+r^{2n}}\right)
    \right].
\]
Equivalently,
\[
    v_\lambda^0(r,\theta)
    =
    -
    \frac{\pi}{2n\sqrt{2\pi}\sin(\pi\nu)}
    P_\nu\left(\frac{1-r^{2n}}{1+r^{2n}}\right).
\]
Then \(v_\lambda^0\) is the unique formal solution satisfying
\(\widetilde{\pi}_{\mc F^\#}(v_\lambda^0)=g_0^\#\). Consequently,
\[
    \Gamma_{\mc F,\mc F^\#}(\lambda)g_0^\#
    =
    v_\lambda^0.
\]
Since
\[
    \pi_S(v_\lambda^0)
    =
    -
    \frac{\pi}{2n\sin(\pi\nu)}f_0
    -
    \frac{
        \gamma_E+\psi(\nu+1)
        +\frac{\pi}{2}\cot(\pi\nu)
    }{n}
    g_0
    +
    g_0^\#,
\]
we obtain
\[
    M_{\mc F,\mc F^\#}(\lambda)g_0^\#
    =
    -
    \frac{\pi}{2n\sin(\pi\nu)}f_0
    -
    \frac{
        \gamma_E+\psi(\nu+1)
        +\frac{\pi}{2}\cot(\pi\nu)
    }{n}
    g_0.
\]

For \(k\in\mb Z\) with \(0<|k|\leq n-1\), define
\(v_\lambda^k\in\mc D_{\max}(\Delta)\) by
\[
\begin{aligned}
    v_\lambda^k(r,\theta)
    ={}&
    \frac{\Gamma(1-\alpha_k)}{\sqrt{4\pi|k|}}
    \Bigg[
        P_\nu^{\alpha_k}\left(
            \frac{r^{2n}-1}{1+r^{2n}}
        \right)
        \\
        &\qquad
        -
        \frac{\Gamma(\nu+\alpha_k+1)}
             {\Gamma(\nu-\alpha_k+1)}
        \frac{\sin(\pi\nu)}
             {\sin\bigl(\pi(\nu-\alpha_k)\bigr)}
        P_\nu^{-\alpha_k}\left(
            \frac{r^{2n}-1}{1+r^{2n}}
        \right)
    \Bigg]e^{-ik\theta}.
\end{aligned}
\]
Then \(v_\lambda^k\) is the unique formal solution of
\eqref{eq:model} satisfying
\(\widetilde{\pi}_{\mc F^\#}(v_\lambda^k)=g_k^\#\). Consequently,
\[
    \Gamma_{\mc F,\mc F^\#}(\lambda)g_k^\#
    =
    v_\lambda^k.
\]
Since
\[
\begin{aligned}
    \pi_S(v_\lambda^k)
    ={}&
    -
    \frac{\Gamma(1-\alpha_k)}{\Gamma(1+\alpha_k)}
    \frac{\Gamma(\nu+\alpha_k+1)}
         {\Gamma(\nu-\alpha_k+1)}
    \frac{\sin(\pi\alpha_k)}
         {\sin\bigl(\pi(\nu-\alpha_k)\bigr)}
    f_{-k}
    \\[2mm]
    &\quad
    -
    \frac{\Gamma(1-\alpha_k)}{\Gamma(1+\alpha_k)}
    \frac{\Gamma(\nu+\alpha_k+1)}
         {\Gamma(\nu-\alpha_k+1)}
    \frac{\sin(\pi\nu)}
         {\sin\bigl(\pi(\nu-\alpha_k)\bigr)}
    g_k
    +
    g_k^\#,
\end{aligned}
\]
we obtain
\[
\begin{aligned}
    M_{\mc F,\mc F^\#}(\lambda)g_k^\#
    ={}&
    -
    \frac{\Gamma(1-\alpha_k)}{\Gamma(1+\alpha_k)}
    \frac{\Gamma(\nu+\alpha_k+1)}
         {\Gamma(\nu-\alpha_k+1)}
    \frac{\sin(\pi\alpha_k)}
         {\sin\bigl(\pi(\nu-\alpha_k)\bigr)}
    f_{-k}
    \\[2mm]
    &\quad
    -
    \frac{\Gamma(1-\alpha_k)}{\Gamma(1+\alpha_k)}
    \frac{\Gamma(\nu+\alpha_k+1)}
         {\Gamma(\nu-\alpha_k+1)}
    \frac{\sin(\pi\nu)}
         {\sin\bigl(\pi(\nu-\alpha_k)\bigr)}
    g_k.
\end{aligned}
\]

We remark that
\(
    \bigl\{
        u_\lambda^k,v_\lambda^k:
        |k|\leq n-1
    \bigr\}
\)
forms a basis of \(\ker(\Delta-\lambda)\). Indeed, these formal solutions
are the images under \(\Gamma_{\mc F,\mc F^\#}(\lambda)\) of the basis
\(
    \bigl\{
        f_k^\#,g_k^\#:
        |k|\leq n-1
    \bigr\}
\)
of \(\mc F^\#\). This also verifies directly that
\(\dim_{\mb C}\ker(\Delta-\lambda)=4n-2=\dim_{\mb C}\mc F^\#\).

To exhibit the block structure of the \(S\)-matrix, we group the basis
elements according to the pairs of Fourier modes coupled by the coordinate
transformation \(y=x^{-1}\). Let
\(\beta_0^\#=(f_0^\#,g_0^\#)\) and \(\beta_0=(f_0,g_0)\). For
\(1\leq k\leq n-1\), set
\(\beta_k^\#=(f_k^\#,g_{-k}^\#)\),
\(\beta_k=(f_k,g_{-k})\),
\(\beta_{-k}^\#=(f_{-k}^\#,g_k^\#)\), and
\(\beta_{-k}=(f_{-k},g_k)\). We form the ordered bases
\(\beta^\#=(\beta_0^\#;\beta_1^\#,\beta_{-1}^\#;\ldots;
\beta_{n-1}^\#,\beta_{-(n-1)}^\#)\) of \(\mc F^\#\) and
\(\beta=(\beta_0;\beta_1,\beta_{-1};\ldots;
\beta_{n-1},\beta_{-(n-1)})\) of \(\mc F\).

With respect to the ordered basis pair
\((\beta_0^\#,\beta_0)\), the zero-mode block is
\[
    S_0(\lambda)
    =
    -
    \frac{1}{n}
    \begin{pmatrix}
        \gamma_E+\psi(\nu+1)+\dfrac{\pi}{2}\cot(\pi\nu)
        &
        \dfrac{\pi}{2\sin(\pi\nu)}
        \\[3mm]
        \dfrac{\pi}{2\sin(\pi\nu)}
        &
        \gamma_E+\psi(\nu+1)+\dfrac{\pi}{2}\cot(\pi\nu)
    \end{pmatrix}.
\]
For \(1\leq k\leq n-1\), the restrictions of
\(M_{\mc F,\mc F^\#}(\lambda)\) to
\(\operatorname{span}_{\mb C}\{f_k^\#,g_{-k}^\#\}\) and
\(\operatorname{span}_{\mb C}\{f_{-k}^\#,g_k^\#\}\) have the same
matrix representation. Namely,
\[
\begin{aligned}
    S_k(\lambda)
    &:=
    \left[
        M_{\mc F,\mc F^\#}(\lambda)
        \big|_{\operatorname{span}_{\mb C}\{f_k^\#,g_{-k}^\#\}}
    \right]_{\beta_k^\#}^{\beta_k}
    \\
    &=
    \left[
        M_{\mc F,\mc F^\#}(\lambda)
        \big|_{\operatorname{span}_{\mb C}\{f_{-k}^\#,g_k^\#\}}
    \right]_{\beta_{-k}^\#}^{\beta_{-k}}
    \\
    &=
    -
    \frac{\Gamma(1-\alpha_k)}{\Gamma(1+\alpha_k)}
    \frac{\Gamma(\nu+\alpha_k+1)}
         {\Gamma(\nu-\alpha_k+1)}
    \frac{1}{\sin\bigl(\pi(\nu-\alpha_k)\bigr)}
    \begin{pmatrix}
        \sin(\pi\nu) & \sin(\pi\alpha_k)
        \\[2mm]
        \sin(\pi\alpha_k) & \sin(\pi\nu)
    \end{pmatrix}.
\end{aligned}
\]
Consequently, relative to the ordered bases
\((\beta^\#,\beta)\), the \(S\)-matrix is block diagonal:
\[
    S_{\mc F,\mc F^\#}^{\beta,\beta^\#}(\lambda)
    =
    \begin{pmatrix}
        S_0(\lambda) & 0 & 0 & \cdots & 0 & 0 \\
        0 & S_1(\lambda) & 0 & \cdots & 0 & 0 \\
        0 & 0 & S_1(\lambda) & \cdots & 0 & 0 \\
        \vdots & \vdots & \vdots & \ddots & \vdots & \vdots \\
        0 & 0 & 0 & \cdots & S_{n-1}(\lambda) & 0 \\
        0 & 0 & 0 & \cdots & 0 & S_{n-1}(\lambda)
    \end{pmatrix}.
\]

The explicit formulas above verify, in the present model, the general
relation between the poles of the Weyl function and the spectrum of the
reference extension established in \cite{LiouKrein}. Indeed, up to the
symmetry \(\nu\mapsto-\nu-1\), the zero-mode block has poles at
\(\nu=q\), \(q\in\mb Z_{\geq0}\), whereas the nonzero-mode blocks have
poles at
\[
    \nu=\alpha_k+q,
    \qquad
    1\leq k\leq n-1,\quad q\in\mb Z_{\geq0}.
\]
Here the displayed expressions are understood meromorphically, so that
apparent singularities which are removable are not counted as poles. Under
the relation \(\lambda=\nu(\nu+1)\), these poles reproduce the Friedrichs
spectrum of Theorem~\ref{thm:friedrichs-spectrum} as a set.
Indeed, if \(|m|=\ell n+k\), where
\(0\leq k\leq n-1\), then
\(\alpha_m=\ell+\alpha_k\), and hence
\[
    \lambda_{q,m}
    =
    \lambda_{q+\ell,k}.
\]
Thus the eigenvalues arising from the modes \(|m|\geq n\) introduce no
additional spectral values, although they do contribute to the
multiplicities.

We now derive the relation between the \(S\)-matrix and the Legendre
connection matrix. To this end, we introduce some notation. Let
\(T:\mc F^\#\to\mc F\) be a linear map. Denote by
\(\iota_0^\#:\mc F_0^\#\to\mc F^\#\) and
\(\iota_\infty^\#:\mc F_\infty^\#\to\mc F^\#\) the canonical
inclusions, and by \(p_0:\mc F\to\mc F_0\) and
\(p_\infty:\mc F\to\mc F_\infty\) the canonical projections.

Define
\(T_{11}=p_0\circ T\circ\iota_0^\#\),
\(T_{12}=p_0\circ T\circ\iota_\infty^\#\),
\(T_{21}=p_\infty\circ T\circ\iota_0^\#\), and
\(T_{22}=p_\infty\circ T\circ\iota_\infty^\#\). Thus
\[
    T
    =
    \begin{pmatrix}
        T_{11} & T_{12} \\
        T_{21} & T_{22}
    \end{pmatrix}
\]
with respect to the decompositions
\(\mc F^\#=\mc F_0^\#\oplus\mc F_\infty^\#\) and
\(\mc F=\mc F_0\oplus\mc F_\infty\).

Let \(h^\#\in\mc F^\#\), and write
\(h^\#=h_0^\#+h_\infty^\#\), where
\(h_0^\#\in\mc F_0^\#\) and
\(h_\infty^\#\in\mc F_\infty^\#\). Set
\[
    u_\lambda
    =
    \Gamma_{\mc F,\mc F^\#}(\lambda)h^\#.
\]
Then
\[
    \pi_S(u_\lambda)
    =
    M_{\mc F,\mc F^\#}(\lambda)h^\#
    +
    h^\#.
\]
Write
\[
    M_{\mc F,\mc F^\#}(\lambda)
    =
    \begin{pmatrix}
        M_{11}(\lambda) & M_{12}(\lambda) \\
        M_{21}(\lambda) & M_{22}(\lambda)
    \end{pmatrix}
\]
with respect to the decompositions
\(\mc F^\#=\mc F_0^\#\oplus\mc F_\infty^\#\) and
\(\mc F=\mc F_0\oplus\mc F_\infty\). It follows that
\[
\begin{aligned}
    \pi_0(u_\lambda)
    &=
    M_{11}(\lambda)h_0^\#
    +
    M_{12}(\lambda)h_\infty^\#
    +
    h_0^\#,
    \\
    \pi_\infty(u_\lambda)
    &=
    M_{21}(\lambda)h_0^\#
    +
    M_{22}(\lambda)h_\infty^\#
    +
    h_\infty^\#.
\end{aligned}
\]

Similarly, write the boundary connection map as
\[
    \mc C_\nu
    =
    \begin{pmatrix}
        C_{11}(\nu) & C_{12}(\nu) \\
        C_{21}(\nu) & C_{22}(\nu)
    \end{pmatrix}
\]
with respect to the decompositions
\(\mc V_0=\mc F_0\oplus\mc F_0^\#\) and
\(\mc V_\infty=\mc F_\infty\oplus\mc F_\infty^\#\). Since
\(\pi_\infty(u_\lambda)=\mc C_\nu\pi_0(u_\lambda)\), we obtain
\[
\begin{aligned}
    M_{21}(\lambda)h_0^\#
    +
    M_{22}(\lambda)h_\infty^\#
    &=
    C_{11}(\nu)
    \bigl(
        M_{11}(\lambda)h_0^\#
        +
        M_{12}(\lambda)h_\infty^\#
    \bigr)
    +
    C_{12}(\nu)h_0^\#,
    \\
    h_\infty^\#
    &=
    C_{21}(\nu)
    \bigl(
        M_{11}(\lambda)h_0^\#
        +
        M_{12}(\lambda)h_\infty^\#
    \bigr)
    +
    C_{22}(\nu)h_0^\#.
\end{aligned}
\]
Since \(h_0^\#\) and \(h_\infty^\#\) are arbitrary, comparison of
coefficients gives
\[
    C_{21}(\nu)M_{11}(\lambda)+C_{22}(\nu)=0,
    \qquad
    C_{21}(\nu)M_{12}(\lambda)=I_{\mc F_\infty^\#},
\]
and
\[
    M_{21}(\lambda)
    =
    C_{11}(\nu)M_{11}(\lambda)+C_{12}(\nu),
    \qquad
    M_{22}(\lambda)
    =
    C_{11}(\nu)M_{12}(\lambda).
\]

Since
\(C_{21}(\nu)M_{12}(\lambda)=I_{\mc F_\infty^\#}\), the map
\(C_{21}(\nu):\mc F_0\to\mc F_\infty^\#\) is surjective. Since
\(\mc F_0\) and \(\mc F_\infty^\#\) have the same finite dimension,
\(C_{21}(\nu)\) is invertible. Hence
\(M_{12}(\lambda)=C_{21}(\nu)^{-1}\). Consequently,
\[
    M_{\mc F,\mc F^\#}(\lambda)
    =
    \begin{pmatrix}
        -C_{21}(\nu)^{-1}C_{22}(\nu)
        &
        C_{21}(\nu)^{-1}
        \\[2mm]
        C_{12}(\nu)
        -
        C_{11}(\nu)C_{21}(\nu)^{-1}C_{22}(\nu)
        &
        C_{11}(\nu)C_{21}(\nu)^{-1}
    \end{pmatrix}.
\]
This leads to the following theorem.

\begin{thm}\label{thm:weyl-connection-matrix}
Let \(\lambda\in\rho(\Delta_{\mc F})\), and choose \(\nu\in\mb C\)
such that \(\lambda=\nu(\nu+1)\). Write the boundary connection map
\(\mc C_\nu:\mc V_0\to\mc V_\infty\), relative to the decompositions
\(\mc V_0=\mc F_0\oplus\mc F_0^\#\) and
\(\mc V_\infty=\mc F_\infty\oplus\mc F_\infty^\#\), as
\[
    \mc C_\nu
    =
    \begin{pmatrix}
        C_{11}(\nu) & C_{12}(\nu) \\
        C_{21}(\nu) & C_{22}(\nu)
    \end{pmatrix}.
\]
Then \(C_{21}(\nu):\mc F_0\to\mc F_\infty^\#\) is invertible, and the
Weyl function \(M_{\mc F,\mc F^\#}(\lambda):\mc F^\#\to\mc F\) is
given, relative to the decompositions
\(\mc F^\#=\mc F_0^\#\oplus\mc F_\infty^\#\) and
\(\mc F=\mc F_0\oplus\mc F_\infty\), by
\[
    M_{\mc F,\mc F^\#}(\lambda)
    =
    \begin{pmatrix}
        -C_{21}(\nu)^{-1}C_{22}(\nu)
        &
        C_{21}(\nu)^{-1}
        \\[2mm]
        C_{12}(\nu)
        -
        C_{11}(\nu)C_{21}(\nu)^{-1}C_{22}(\nu)
        &
        C_{11}(\nu)C_{21}(\nu)^{-1}
    \end{pmatrix}.
\]
Consequently, taking matrix representations with respect to the chosen
symplectically dual bases, the \(S\)-matrix is determined explicitly by
the Legendre connection matrix through the boundary connection map
\[
    \mc C_\nu
    =
    \mc A_\infty^\nu
    \circ
    \mc M_\nu
    \circ
    (\mc A_0^\nu)^{-1}.
\]
\end{thm}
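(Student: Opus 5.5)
The plan is to follow the derivation preceding the statement and make each step rigorous. The argument needs only three ingredients: the decomposition \(\pi_S(u_\lambda)=M_{\mc F,\mc F^\#}(\lambda)h^\#+h^\#\) for \(u_\lambda=\Gamma_{\mc F,\mc F^\#}(\lambda)h^\#\), the gluing identity \(\pi_\infty(u_\lambda)=\mc C_\nu\pi_0(u_\lambda)\) from Lemma~\ref{lem:boundary-connection-map}, and the fact that \(\Gamma_{\mc F,\mc F^\#}(\lambda)\) is defined on all of \(\mc F^\#\) because \(\lambda\in\rho(\Delta_{\mc F})\).

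First I fix \(h^\#=h_0^\#+h_\infty^\#\) arbitrary and split \(\pi_S(u_\lambda)\) into its \(V_0\)- and \(V_\infty\)-components. Writing \(M_{\mc F,\mc F^\#}(\lambda)\) in block form gives \(\pi_0(u_\lambda)=(M_{11}h_0^\#+M_{12}h_\infty^\#)+h_0^\#\in\mc F_0\oplus\mc F_0^\#\), and similarly for \(\pi_\infty(u_\lambda)\). Substituting into the gluing identity and projecting onto \(\mc F_\infty\) and \(\mc F_\infty^\#\) yields four operator identities. Because \(h_0^\#\) and \(h_\infty^\#\) range independently over \(\mc F_0^\#\) and \(\mc F_\infty^\#\), each identity holds as an equality of linear maps:
\[
C_{21}M_{11}+C_{22}=0,\qquad C_{21}M_{12}=I,\qquad M_{21}=C_{11}M_{11}+C_{12},\qquad M_{22}=C_{11}M_{12}.
\]

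Next I prove that \(C_{21}(\nu)\) is invertible. This is the only step that uses \(\lambda\in\rho(\Delta_{\mc F})\) beyond the existence of \(\Gamma\), and I expect it to be the main point. The identity \(C_{21}M_{12}=I_{\mc F_\infty^\#}\) shows that \(C_{21}\) is surjective. Since \(\dim\mc F_0=\dim\mc F_\infty^\#=2n-1\), surjectivity implies bijectivity, and then \(M_{12}=C_{21}^{-1}\).

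I would also check this directly. If \(C_{21}\xi=0\) for some \(0\ne\xi\in\mc F_0\), then the formal solution with \(\pi_0=\xi\), which exists since \(\mc C_\nu\) encodes every pair \((A_k,B_k)\), would have \(\pi_\infty\in\mc F_\infty\). It would then lie in \(\mc D_{\mc F}\) and be an eigenfunction, contradicting \(\lambda\in\rho(\Delta_{\mc F})\). This gives an independent confirmation.

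Solving the four identities for the \(M_{ij}\) then yields the displayed block matrix:
\[
M_{11}=-C_{21}^{-1}C_{22},\qquad M_{12}=C_{21}^{-1},\qquad M_{21}=C_{12}-C_{11}C_{21}^{-1}C_{22},\qquad M_{22}=C_{11}C_{21}^{-1}.
\]
Finally, taking matrices with respect to the symplectically dual bases \(\beta^\#,\beta\), and substituting \(\mc C_\nu=\mc A_\infty^\nu\circ\mc M_\nu\circ(\mc A_0^\nu)^{-1}\) from Lemma~\ref{lem:boundary-connection-map}, expresses the \(S\)-matrix explicitly in terms of the Legendre connection matrices \(M_k(\nu)\).
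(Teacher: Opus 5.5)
Your proposal is correct and follows essentially the same route as the paper. You decompose \(\pi_S(\Gamma_{\mc F,\mc F^\#}(\lambda)h^\#)=M_{\mc F,\mc F^\#}(\lambda)h^\#+h^\#\) into blocks, insert it into the gluing identity \(\pi_\infty(u_\lambda)=\mc C_\nu\pi_0(u_\lambda)\), read off the four operator identities, and obtain invertibility of \(C_{21}(\nu)\) from \(C_{21}(\nu)M_{12}(\lambda)=I_{\mc F_\infty^\#}\) together with the dimension count. Your supplementary injectivity check, via a nonzero critical-mode formal solution that is regular at both cone points, is the same interpretation the paper gives in its remark following the theorem.
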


This theorem explains how the Weyl function can be recovered from the
boundary connection map. The map \(\mc C_\nu\) relates the boundary data
of the same formal solution at the two conic points: once its boundary
data at \(0\) are known, \(\mc C_\nu\) determines the corresponding
boundary data at \(\infty\). The Weyl function, on the other hand, starts
with prescribed singular boundary data in \(\mc F^\#\) and determines
the corresponding regular boundary data in \(\mc F\). The formula in
the theorem is obtained by combining these two descriptions through the
gluing relation
\(\pi_\infty(u_\lambda)=\mc C_\nu\pi_0(u_\lambda)\) and solving for the
regular boundary components.

Since \(\mc C_\nu\) is induced by the Legendre connection matrices, the
theorem shows that the classical connection formulas for Legendre
functions determine the Weyl function, and hence the \(S\)-matrix, of
the conic Laplacian. Thus the transition between local Legendre
solutions at \(0\) and \(\infty\) determines the global relation between
the singular and regular boundary data of formal solutions on
\(\mb P^1\).

The block \(C_{21}(\nu):\mc F_0\to\mc F_\infty^\#\) plays a particularly
important role. It records how regular boundary data at \(0\) contribute
to singular boundary data at \(\infty\). Its invertibility makes it
possible to recover uniquely the regular boundary data at \(0\) from
the singular boundary data at \(\infty\). Moreover, a vector
\(h_0\in\ker C_{21}(\nu)\) would give rise to a formal solution whose
boundary data are regular at both conic points, and hence to a
Friedrichs eigenfunction with eigenvalue \(\lambda\). Therefore the
invertibility of \(C_{21}(\nu)\) is the boundary-data manifestation of
the assumption \(\lambda\in\rho(\Delta_{\mc F})\).

\section{Conclusion}

For the projective conic model on \(\mb P^1\), we have shown that the
local Legendre connection problem and the global boundary-spectral
problem are two manifestations of the same underlying structure. The
boundary connection map induced by the classical Legendre connection
formulas determines the Weyl function and the \(S\)-matrix, while the
invertibility of its \(C_{21}\)-block characterizes the Friedrichs
resolvent condition. This identification gives a geometric
interpretation of the Legendre connection matrices and provides an
explicit bridge between special-function theory, boundary symplectic
geometry, and the spectral theory of conic Laplacians.

\section*{Acknowledgments}
The author is deeply grateful to the Lord Jesus Christ and to his family for sustaining him through a difficult period and for their steadfast encouragement and support.


\begin{thebibliography}{99}\large

\bibitem{Cheeger}
J.~Cheeger,
Spectral geometry of singular Riemannian spaces,
\emph{J. Differential Geom.} \textbf{18} (1983), no.~4, 575--657.

\bibitem{GilMendoza}
J.~B.~Gil and G.~A.~Mendoza,
Adjoints of elliptic cone operators,
\emph{Amer. J. Math.} \textbf{125} (2003), no.~2, 357--408.

\bibitem{GilKrainerMendoza}
J.~B.~Gil, T.~Krainer, and G.~A.~Mendoza,
Geometry and spectra of closed extensions of elliptic cone operators,
\emph{Canad. J. Math.} \textbf{59} (2007), no.~4, 742--794.

\bibitem{GilKrainerMendozaResolvents}
J.~B.~Gil, T.~Krainer, and G.~A.~Mendoza,
Resolvents of elliptic cone operators,
\emph{J. Funct. Anal.} \textbf{241} (2006), no.~1, 1--55.

\bibitem{HillairetKokotov}
L.~Hillairet and A.~Kokotov,
Krein formula and \(S\)-matrix for Euclidean surfaces with conical
singularities,
\emph{J. Geom. Anal.} \textbf{23} (2013), no.~3, 1498--1529.

\bibitem{LiouKrein}
J.-M.~Liou,
Krein's formula for conic Laplacians on compact Riemann surfaces,
arXiv preprint arXiv:2606.20818v1 (2026).

\bibitem{DLMF}
NIST Digital Library of Mathematical Functions,
Chapter~14: Legendre and related functions,
Version~1.2.7, release date June~15, 2026,
National Institute of Standards and Technology,
\texttt{https://dlmf.nist.gov/14}.

\bibitem{Troyanov}
M.~Troyanov,
Prescribing curvature on compact surfaces with conical singularities,
\emph{Trans. Amer. Math. Soc.} \textbf{324} (1991), no.~2, 793--821.

\end{thebibliography}
\end{document}